\DeclareMathOperator{\gen}{gen}
\DeclareMathOperator{\Gr}{Gr}
\DeclareMathOperator{\diag}{diag}
\DeclareMathOperator{\Hom}{Hom}
\DeclareMathOperator{\Gal}{Gal}
\DeclareMathOperator{\Cl}{Cl}
\DeclareMathOperator{\PGSp}{PGSp}
\DeclareMathOperator{\GL}{GL}
\DeclareMathOperator{\PGL}{PGL}
\DeclareMathOperator{\tr}{tr}
\DeclareMathOperator{\Coind}{Coind}
\newcommand{\lra}{\longrightarrow}
\newcommand{\ZZ}{\mathbb{Z}}
\newcommand{\QQ}{\mathbb{Q}}
\newcommand{\RR}{\mathbb{R}}
\newcommand{\FF}{\mathbb{F}}
\newcommand{\Z}{\mathbb{Z}}
\newcommand{\Q}{\mathbb{Q}}
\newcommand{\C}{\mathbb{C}}
\newcommand{\R}{\mathbb{R}}
\newcommand{\F}{\mathbb{F}}
\newcommand{\cH}{\mathcal{H}}
\newcommand{\fA}{\mathfrak{A}}
\newcommand{\fB}{\mathfrak{B}}
\newcommand{\fd}{\mathfrak{d}}
\newcommand{\fp}{\mathfrak{p}}
\newcommand{\fP}{\mathfrak{P}}
\newcommand{\fq}{\mathfrak{q}}
\newcommand{\fQ}{\mathfrak{Q}}
\renewcommand{\phi}{\varphi}
\renewcommand{\epsilon}{\varepsilon}
\DeclareMathOperator{\Aut}{Aut}
\DeclareMathOperator{\opchar}{char}
\newcommand{\calH}{\mathcal H}
\newcommand{\calO}{\mathcal O}
\newcommand{\bdA}{\mathbf A}
\newcommand{\sfA}{\mathsf A}
\newcommand{\sfG}{\mathsf G}
\newcommand{\sfU}{\mathsf U}
\newcommand{\Fhat}{\widehat{F}}
\newcommand{\Ghat}{\widehat{G}}
\newcommand{\Khat}{\widehat{K}}
\newcommand{\Lambdahat}{\widehat{\Lambda}}
\newcommand{\ghat}{\widehat{g}}
\newcommand{\phat}{\widehat{p}}
\newcommand{\uhat}{\widehat{u}}
\newcommand{\xhat}{\widehat{x}}
\newcommand{\Qhat}{\widehat{\mathbb Q}}
\newcommand{\Zhat}{\widehat{\mathbb Z}}
\newenvironment{enumalg}
{\begin{enumerate}}
{\end{enumerate}}
\newenvironment{enumalgalph1}
{\begin{enumerate}}
{\end{enumerate}}
\newenvironment{enumalgalph}
{\begin{enumerate}}
{\end{enumerate}}
\newenvironment{enumroman}
{\begin{enumerate}}
{\end{enumerate}}
\begin{document}

\title*{Lattice methods for algebraic modular forms on classical groups}
\author{Matthew Greenberg and John Voight}
\institute{Matthew Greenberg \at University of Calgary, 2500 University Drive NW, Calgary, AB, T2N 1N4, Canada, \email{mgreenbe@math.ucalgary.ca}
\and John Voight \at Department of Mathematics and Statistics, University of Vermont, 16 Colchester Ave, Burlington, VT 05401, USA, \email{jvoight@gmail.com}}
%
%
\maketitle

\abstract*{We use Kneser's neighbor method and isometry testing for lattices due to Plesken and Souveigner to compute systems of Hecke eigenvalues associated to definite forms of classical reductive algebraic groups.}

\abstract{We use Kneser's neighbor method and isometry testing for lattices due to Plesken and Souveigner to compute systems of Hecke eigenvalues associated to definite forms of classical reductive algebraic groups.}


\section{Introduction}

Let $Q(x)=Q(x_1,\dots,x_n) \in \Z[x_1,\dots,x_n]$ be an even positive definite integral quadratic form in $n$ variables with discriminant $N$.  A subject of extensive classical study, continuing today, concerns the number of representations of an integer by the quadratic form $Q$.  To do so, we form the corresponding generating series, called the \emph{theta series} of $Q$:
\[ \theta_Q(q)=\sum_{x \in \Z^n} q^{Q(x)} \in \Z[[q]]. \]
By letting $q=e^{2\pi iz}$ for $z$ in the upper half-plane $\calH$, we obtain a holomorphic function $\theta:\calH \to \C$;  owing to its symmetric description, this function is a classical modular form of weight $n/2$ and level $4N$.  For example, in this way one can study the representations of an integer as the sum of squares via Eisenstein series for small even values of $n$.

Conversely, theta series can be used to understand spaces of classical modular forms.  This method goes by the name \emph{Brandt matrices} as it goes back to early work of Brandt and Eichler \cite{Eichlerbasis,Eichlerbasis1} (the \emph{basis problem}).  From the start, Brandt matrices were used to computationally study spaces of modular forms, and explicit algorithms were exhibited by Pizer \cite{Pizer}, Hijikata, Pizer, and Shemanske \cite{HPS}, and Kohel \cite{kohel}.  In this approach, a basis for $S_2(N)$ is obtained by linear combinations of theta series associated to (right) ideals in a quaternion order of discriminant $N$; the Brandt matrices which represent the action of the Hecke operators are obtained via the combinatorial data encoded in the coefficients of theta series.  These methods have also been extended to Hilbert modular forms over totally real fields, by Socrates and Whitehouse \cite{SocratesWhitehouse}, Demb\'el\'e \cite{Dembele}, and Demb\'el\'e and Donnelly \cite{DD}.

The connection between such arithmetically-defined counting functions and modular forms is one piece of the Langlands philosophy, which predicts deep connections between automorphic forms in different guises via their Galois representations.  In this article, we consider algorithms for computing systems of Hecke eigenvalues in the more general setting of algebraic modular forms, as introduced by Gross \cite{Gross1}.  Let $\sfG$ be a \emph{linear algebraic group} defined over $\Q$, a closed algebraic subgroup of the algebraic group $\GL_n$.  (For simplicity now we work over $\Q$, but in the body we work with a group $\sfG$ defined over a number field $F$; to reduce to this case, one may just take the restriction of scalars.)  Let $\sfG(\Z)=\sfG(\Q) \cap \GL_n(\Z)$ be the group of integral points of $\sfG$.  

Suppose that $\sfG$ is \emph{connected} as an algebraic variety and \emph{reductive}, so that its maximal connected unipotent normal subgroup is trivial (a technical condition important for the theory).  Let $G_\infty=\sfG(\R)$ denote the real points of $\sfG$.  Then $G_\infty$ is a real Lie group with finitely many connected components.  

Now we make an important assumption that allows us to compute via arithmetic and lattice methods: we suppose that $G_\infty$ is compact.  For example, we may take $\sfG$ to be a special orthogonal group, those transformations of determinant $1$ preserving a positive definite quadratic form over a totally real field, or a unitary group, those preserving a definite Hermitian form relative to a CM extension of number fields.  Under this hypothesis, Gross \cite{Gross1} showed that automorphic forms arise without analytic hypotheses and so are called \emph{algebraic modular forms}.

Let $\Qhat=\Q \otimes_{\Z} \Zhat$ be the finite adeles of $\Q$.  Let $\Khat$ be a compact open subgroup of $\Ghat=\sfG(\Qhat)$ (a choice of \emph{level}), let $G=\sfG(\Q)$, and let 
\[ Y = G \backslash \Ghat/\Khat. \]  
The set $Y$ is finite.
Let $W$ be an irreducible (finite-dimensional) representation of $G$.  Then the space of \emph{modular forms for $\sfG$ of weight $W$ and level $\Khat$} is
\[ M(W,\Khat) = \{f : \Ghat/\Khat \to W \mid f(\gamma g)=\gamma f(g) \text{ for all $\gamma \in G$}\}. \]
Such a function $f \in M(W,\Khat)$ is determined by its values on the finite set $Y$; indeed, if $W$ is the trivial representation, then modular forms are simply functions on $Y$.  The space $M(W,\Khat)$ is equipped with an action of \emph{Hecke operators} for each double coset $\Khat \phat \Khat$ with $\phat \in \Ghat$; these operators form a ring under convolution, called the \emph{Hecke algebra}.

Algebraic modular forms in the guise of Brandt matrices and theta series of quaternary quadratic forms, mentioned above, correspond to the case where $G=\PGL_1(B) = B^\times/F^\times$ where $B$ is a definite quaternion algebra over a totally real field $F$.  The first more general algorithmic consideration of algebraic modular forms was undertaken by Lanksy and Pollack \cite{LanskyPollack}, who computed with the group $\sfG=\PGSp_4$ and the exceptional group $\sfG=G_2$ over $\Q$.  Cunningham and Demb\'el\'e \cite{CD} later computed Siegel modular forms over totally real fields using algebraic modular forms, and Loeffler \cite{Loeffler} has performed computations with the unitary group $U(2)$ relative to the imaginary quadratic extension $\Q(\sqrt{-11})/\Q$ and $U(3)$ relative to $\Q(\sqrt{-7})/\Q$.  
In this paper, we consider the case where the group $\sfG$ arises from a definite special orthogonal or unitary group.  Our main idea is the use lattice methods, making these computations efficient.  This connection is undoubtedly known to the experts, and our small contribution is make it explicit and discuss the relevant computational aspects.  
We conjecture that, assuming an appropriate analogue of the Ramanujan-Petersson conjecture, lattice methods will run in polynomial time in the output size.  (This is known to be true for Brandt matrices, by work of Kirschmer and the second author \cite{KV}.)

To illustrate our method as we began, let $Q$ be a positive definite quadratic form in $d$ variables over a totally real field $F$, and let $\sfG=SO(Q)$ be the special orthogonal group of $Q$ over $F$.  (To work instead with unitary groups, we simply work with a Hermitian form instead.)  Then $\sfG$ is a connected 
reductive group with $G_\infty=\sfG(F \otimes_{\Q} \R)$ compact.  Let $\Lambda$ be a $\Z_F$-lattice in $F^d$.  
Then the stabilizer $\Khat \subset \Ghat$ of $\Lambdahat=\Lambda \otimes_\Z \Qhat$ is an open compact subgroup and the set $Y=G\backslash \Ghat/\Khat$ is in natural bijection with the finite set of equivalence classes of lattices in the \emph{genus} of $\Lambda$, the set of lattices which are \emph{locally equivalent} to $\Lambda$.  

The enumeration of representatives of the genus of a lattice has been studied in great detail; we use Kneser's neighbor method \cite{Kneser}.  (See the beginning of Section 5 for further references to the use of this method.)
Let $\fp \subset \Z_F$ be a nonzero prime ideal with residue class field $\F_\fp$.  We say that two $\Z_F$-lattices $\Lambda,\Pi \subset F^n$ are \emph{$\fp$-neighbors} if we have $\fp \Lambda, \fp \Pi \subset \Lambda \cap \Pi$ and
\[ \dim_{\F_\fp} \Lambda/(\Lambda \cap \Pi) = \dim_{\F_\fp} \Pi/(\Lambda \cap \Pi) = 1. \]
The $\fp$-neighbors of $\Lambda$ are easy to construct, are locally equivalent to $\Lambda$, and by strong approximation, every class in the genus is represented by a $\fp$-neighbor for some $\fp$.  In fact, by the theory of elementary divisors, the Hecke operators are also obtained as a summation over $\fp$-neighbors.  Therefore the algorithmic theory of lattices is armed and ready for application to computing automorphic forms.


The main workhorse in using $\fp$-neighbors in this way is an algorithm for isometry testing between lattices (orthogonal, Hermitian, or otherwise preserving a quadratic form).  For this, we rely on the algorithm of Plesken and Souvignier \cite{PS}, which matches up short vectors and uses other tricks to rule out isometry as early as possible. This algorithm was implemented in \textsf{Magma} \cite{Magma} by Souvignier, with further refinements to the code contributed by Steel, Nebe, and others.  

These methods also apply to compact forms of symplectic groups; see Chisholm \cite{Chisholm}.  We anticipate that these methods can be generalized to a wider class of reductive groups, and believe that such an investigation would prove valuable for explicit investigations in the Langlands program.


The outline of this paper is as follows.  In Section 2, we give basic terminology and notation for algebraic modular forms.  In section 3, we review orthogonal and unitary groups and their Hecke theory.  In section 4 we discuss elementary divisors in preparation for section 5, where we give an exposition of Kneser's neighbor method and translate Hecke theory to the lattice setting.  In section 6, we present the algorithm, and we conclude in section 7 with some explicit examples.




\section{Algebraic modular forms}

In this first section, we define algebraic modular forms; a reference is the original work of Gross \cite{Gross1}.  


\subsection*{Algebraic modular forms}

Let $F$ be a totally real number field and let 
\[
F_\infty=F\otimes_\QQ\RR\cong\RR^{[F:\Q]}.
\]
Let $\Qhat=\Q \otimes_{\Z} \Zhat$ be the finite adeles of $\Q$, let $\Fhat = F \otimes_{\Q} \Qhat$ be the ring of finite adeles of $F$.

Let $\sfG$ be a connected, reductive algebraic group over $F$.  We make the important and nontrivial assumption that the Lie group $G_\infty = G(F_\infty)$ is compact.  Let $\Ghat=\sfG(\Fhat)$ and $G=\sfG(F)$.  

Let $\rho:G \to W$ be an irreducible (finite-dimensional) representation of $G$ defined over a number field $E$.  

\begin{definition}
The space of \emph{algebraic modular forms for $G$ of weight $W$} is 
\[
M(\sfG,W) = \left\{f : \Ghat \to W\ \Big|\ \begin{minipage}{39ex} 
$f$ is locally constant and \\
$f(\gamma \ghat)=\gamma f(\ghat)$ for all $\gamma \in G$ and $\ghat \in \Ghat$
\end{minipage}
\right\}. \]
\end{definition}

We will often abbreviate $M(W)=M(\sfG,W)$.  





Each $f \in M(W)$ is constant on the cosets of a compact open subgroup $\Khat \subset \Ghat$, so $M(W)$ is the direct limit of the spaces
\begin{equation} \label{MVK}
M(W,\Khat) = \left\{f : \Ghat \to W \ \Big|\ \begin{minipage}{28ex} \begin{center}
$f(\gamma \ghat\uhat)=\gamma f(\ghat)$ \\ 
for all $\gamma \in G$, $\ghat \in \Ghat$, $\uhat \in \Khat$ 
\end{center}
\end{minipage}
\right\}.
\end{equation}
of modular forms of \emph{level $\Khat$}.  We will consider these smaller spaces, so let $\Khat \subset \Ghat$ be an open compact subgroup.  When $W=E$ is the trivial representation, $M(W,\Khat)$ is simply the space of $E$-valued functions on the space $Y=G \backslash \Ghat / \Khat$.

\begin{proposition}[{\cite[Proposition 4.3]{Gross1}}] \label{Yisfinite}
The set $Y=G\backslash \Ghat/\Khat$ is finite.
\end{proposition}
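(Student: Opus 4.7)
The plan is to deduce the result from the classical theorem that $G\backslash \sfG(\mathbb{A}_F)$ is compact whenever $\sfG$ is reductive and $F$-anisotropic, applied to the compact open subgroup $U = G_\infty \times \Khat \subset \sfG(\mathbb{A}_F)$.

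First, I would show that the compactness hypothesis on $G_\infty$ forces $\sfG$ to be anisotropic over $F$. Indeed, suppose $\mathbf{T} \subset \sfG$ is an $F$-split torus of dimension $r \geq 1$; then at any real place $v$ of $F$, the inclusion induces a closed embedding $(\mathbb{R}^\times)^r \hookrightarrow \sfG(F_v)$, and since $F$ is totally real, this embeds as a closed, noncompact subgroup of $G_\infty$, contradicting compactness. Hence $\sfG$ has no nontrivial $F$-split torus; because $\sfG$ is reductive, this is equivalent to $\sfG$ having no proper $F$-parabolic subgroup, i.e.\ $\sfG$ is anisotropic over $F$.

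Second, I would invoke the finiteness theorem of Borel--Harish-Chandra: for a connected reductive group $\sfG$ that is anisotropic over the number field $F$, the adelic quotient $G \backslash \sfG(\mathbb{A}_F)$ is compact. (This is the content of reduction theory in the anisotropic case and is where the main work of the argument has already been done in the literature.)

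Finally, I would translate this compactness statement into finiteness of $Y$. Using $\sfG(\mathbb{A}_F) \cong G_\infty \times \Ghat$, the subgroup $U = G_\infty \times \Khat$ is compact and open, and the projection onto the finite-adelic factor induces a homeomorphism
\[
G \backslash \sfG(\mathbb{A}_F)/U \;\longleftrightarrow\; G \backslash \Ghat/\Khat \;=\; Y.
\]
The left side is compact as a continuous image of the compact space $G \backslash \sfG(\mathbb{A}_F)$; on the other hand, since $U$ is open, the cosets $gU$ are open in $\sfG(\mathbb{A}_F)$, so the quotient topology on $Y$ is discrete. A space that is both compact and discrete is finite, which gives the claim. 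The only nontrivial input is the compactness statement from reduction theory; the rest is a formal manipulation with topological groups.
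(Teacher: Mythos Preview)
Your argument is correct and is precisely the proof that Gross gives in \cite[Proposition 4.3]{Gross1}, which the paper cites without reproducing. The paper itself does not supply an independent proof of this proposition; it simply quotes the result, so there is no alternative approach in the paper to compare against.
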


Let $h=\# Y$.  Writing
\begin{equation} \label{GGK}
\Ghat=\bigsqcup_{i=1}^h G \widehat{x}_i \Khat,
\end{equation}
it follows from the definition that any $f\in M(W,\Khat)$ is completely determined by the elements $f(\xhat_i)$ with $i=1,\ldots,h$.  Let
\[
\Gamma_i = G \cap \xhat_i \Khat \xhat_i^{-1}.
\]
The (arithmetic) group $\Gamma_i$, as a discrete subgroup of the compact group $G_\infty$, is finite \cite[Proposition 1.4]{Gross1}.  

\begin{lemma} \label{finitedim}
The map
\begin{align*}
M(W,\Khat) & \lra \bigoplus_{i=1}^h H^0(\Gamma_i,W) \\
f &\mapsto (f(\xhat_1),\ldots,f(\xhat_h))
\end{align*}
is an isomorphism of $F$-vector spaces, where
\[ H^0(\Gamma_i,W)=\{v \in W : \gamma v = v \text{ for all $\gamma \in \Gamma_i$}\}.  \]  
\end{lemma}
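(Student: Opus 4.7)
The plan is to verify the map is well-defined, $F$-linear (which is automatic), and then construct an explicit inverse, using the double coset decomposition \eqref{GGK} at every step.

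First I would check well-definedness, i.e. that $f(\xhat_i) \in H^0(\Gamma_i, W)$. Given $\gamma \in \Gamma_i = G \cap \xhat_i \Khat \xhat_i^{-1}$, write $\gamma = \xhat_i \uhat \xhat_i^{-1}$ for some $\uhat \in \Khat$. Then using the two transformation properties of $f$ from \eqref{MVK},
\[
\gamma f(\xhat_i) = f(\gamma \xhat_i) = f(\xhat_i \uhat) = f(\xhat_i),
\]
so $f(\xhat_i)$ is $\Gamma_i$-fixed as required.

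Next I would prove injectivity, which is the easy direction: any $\ghat \in \Ghat$ can by \eqref{GGK} be written $\ghat = \gamma \xhat_i \uhat$ with $\gamma \in G$ and $\uhat \in \Khat$, so $f(\ghat) = \gamma f(\xhat_i)$ is determined by the tuple $(f(\xhat_1), \ldots, f(\xhat_h))$; if this tuple vanishes then so does $f$.

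The main (and only delicate) step is surjectivity. Given $(v_1, \ldots, v_h) \in \bigoplus_i H^0(\Gamma_i, W)$, I would define a candidate $f : \Ghat \to W$ by the formula $f(\gamma \xhat_i \uhat) = \gamma v_i$ for $\gamma \in G$ and $\uhat \in \Khat$. The obstacle here is well-definedness: one must show that if $\gamma \xhat_i \uhat = \gamma' \xhat_j \uhat'$, then $\gamma v_i = \gamma' v_j$. Disjointness of the double cosets in \eqref{GGK} forces $i = j$, and then $\gamma^{-1} \gamma' = \xhat_i (\uhat \uhat'^{-1}) \xhat_i^{-1}$ lies in $G \cap \xhat_i \Khat \xhat_i^{-1} = \Gamma_i$; the $\Gamma_i$-invariance of $v_i$ now gives $\gamma' v_i = \gamma (\gamma^{-1}\gamma') v_i = \gamma v_i$. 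This is precisely where the hypothesis $v_i \in H^0(\Gamma_i, W)$ is used, and it is the conceptual heart of the lemma. Finally I would verify the required equivariance $f(\gamma \ghat \uhat) = \gamma f(\ghat)$ by direct substitution, and observe that $f$ recovers $v_i$ as $f(\xhat_i)$ (taking $\gamma = 1$ and $\uhat = 1$), showing the constructed map is a two-sided inverse.
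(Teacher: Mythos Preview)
Your argument is correct and complete: well-definedness, injectivity, and the construction of the inverse with its well-definedness check via $\Gamma_i$-invariance are exactly the expected steps. The paper itself does not supply a proof of this lemma (it is stated and immediately used), so there is no alternative approach to compare against; your write-up is the standard verification one would fill in.
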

In particular, from Lemma \ref{finitedim} we see that $M(W,\Khat)$ is finite-dimensional as an $E$-vector space.

\subsection*{Hecke operators}

The space $M(W,\Khat)$ comes equipped with the action of Hecke operators, defined as follows.  Let $\cH(\Ghat,\Khat)=\cH(\Khat)$ be the space of locally constant, compactly supported, $\Khat$-bi-invariant functions on $\Ghat$.  Then $\cH(\Ghat,\Khat)$ is a ring under convolution, called the \emph{Hecke algebra}, and is generated by the characteristic functions $T(\phat)$ of double cosets $\Khat \phat \Khat$ for $\phat \in \Ghat$.  Given such a characteristic function $T(\phat)$, decompose the double coset $\Khat \phat \Khat$ into a disjoint union of right cosets
\begin{equation} \label{KpKdoubletoright}
\Khat \phat \Khat = \bigsqcup_j \phat_j \Khat
\end{equation}
and define the action of $T(\phat)$ on $f\in M(W,\Khat)$ by
\begin{equation} \label{hecke1}
(T(\phat) f)(\ghat)=\sum_j f(\ghat \phat_j).
\end{equation}
This action is well-defined (independent of the choice of representative $\phat$ and representatives $\phat_j$) by the right $\Khat$-invariance of $f$.  Finally, a straightforward calculation shows that the map in Lemma \ref{finitedim} is Hecke equivariant.

\subsection*{Level}

There is a natural map which relates modular forms of higher level to those of lower level by modifying the coefficient module, as follows \cite[\S 8]{DV}.  Suppose that $\Khat' \leq \Khat$ is a finite index subgroup.  Decomposing as in (\ref{GGK}), we obtain a bijection
\begin{align*}
G\backslash \Ghat/\Khat' = \bigsqcup_{i=1}^h G \backslash \bigl( G \xhat_i \Khat \bigr) / \Khat'
&\xrightarrow{\sim} \bigsqcup_{i=1}^h \Gamma_i \backslash \Khat_i / \Khat_i' \\
G(\gamma \xhat_i \uhat)\Khat' &\mapsto \Gamma_i(\xhat_i \uhat \xhat_i^{-1})\Khat_i'
\end{align*}
for $\gamma \in G$ and $\uhat \in \Khat$.  This yields 
\[ M(W,\Khat') \xrightarrow{\sim} H^0(\Gamma_i, \Hom(\Khat_i/\Khat_i', W)) \cong
\bigoplus_{i=1}^h H^0(\Gamma_i, \Coind_{\Khat_i'}^{\Khat_i} W). \]
Via the obvious bijection 
\begin{equation} \label{KiK}
\Khat_i/\Khat_i' \cong \Khat/\Khat', 
\end{equation} 
letting $W= \Coind_{\Khat'}^{\Khat} W$ we can also write
\begin{equation} \label{MVKp}
M(W,\Khat') \cong \bigoplus_{i=1}^h H^0(\Gamma_i, W_i)
\end{equation}
where $W_i$ is the representation $W$ with action twisted by the identification (\ref{KiK}).  Moreover, writing $\Khat=(K_\fp)_\fp$ in terms of its local components, for any Hecke operator $T(\phat)$ such that 
\begin{center}
if $\phat \not\in K'_\fp$ then $K_\fp=K_\fp'$
\end{center}
(noting that $\phat \in K'_\fp$ for all but finitely many primes $\fp$), the same definition (\ref{hecke1}) applies and by our hypothesis we have a simultaneous double coset decomposition 
\[ \Khat' \phat \Khat' = \bigsqcup_j \phat_j \Khat' \quad\text{and}\quad \Khat \phat \Khat = \bigsqcup_j \phat_j \Khat. \]
Now, comparing (\ref{MVKp}) to the result of Lemma \ref{finitedim}, we see in both cases that modular forms admit a uniform description as $h$-tuples of $\Gamma_i$-invariant maps.  For this reason, a special role in our treatment will be played by maximal open compact subgroups.

\subsection*{Automorphic representations}

As it forms one of the core motivations of our work, we conclude this section by briefly describing the relationship between the spaces $M(W)$ of modular forms and automorphic representations of $\sfG$. Suppose that $W$ is defined over $F$ (cf.\ Gross \cite[\S 3]{Gross1}).  Since $G_\infty$ is compact, by averaging there exists a symmetric, positive-definite, $G_\infty$-invariant bilinear form
\[
\langle\,,\rangle:W_\infty\times W_\infty\lra F_\infty.
\]
where $W_\infty=W\otimes_FF_\infty$.  
Then we have a linear map 
\[ \Psi:M(W) \lra \Hom_{G_\infty}(W_\infty, L^2(G\backslash(\Ghat \times G_\infty),F_\infty)) \]
by
\[ \Psi(f)(v)(\ghat,g_\infty) = \langle \rho(g_\infty)v,f(\ghat)\rangle \]
for $f \in M(W)$, $v \in W$, and $(\ghat,g_\infty) \in \Ghat \times G_\infty$.  The Hecke algebra $\cH(\Khat)$ acts on the representation space 
\[ \Hom_{G_\infty}(W_\infty, L^2(G\backslash (\Ghat \times G_\infty),F_\infty)) \] 
via its standard action on $L^2$ by convolution.  

Now, for a nonzero $v\in W_\infty$, define
\begin{align*}
\Psi_v : M(W) &\to L^2(G\backslash (\Ghat \times G_\infty), F_\infty) \\
f &\mapsto \Psi(f)(v).
\end{align*}
(In practice, it is often convenient to take $v$ to be a highest weight vector.)

\begin{proposition}
The map $\Psi_v$ is $\cH(\Khat)$-equivariant and induces a bijection between irreducible $\cH(\Khat)$-submodules of $M(W,\Khat)$ and automorphic representations $\pi$ of $\sfG(\bdA_F)$ such that 
\begin{enumroman}
\item $\pi(\Khat)$ has a nonzero fixed vector, and
\item $\pi_\infty$ is isomorphic to $\rho_\infty$.
\end{enumroman}
\end{proposition}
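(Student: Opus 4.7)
The plan is to prove the proposition in three stages: first, verify that $\Psi_v$ is well-defined and $\cH(\Khat)$-equivariant; second, upgrade the unrestricted map $\Psi$ to an $\cH(\Khat)$-equivariant isomorphism $M(W)\cong\Hom_{G_\infty}(W_\infty,L^2(G\backslash\sfG(\bdA_F)))$; third, combine this with the spectral decomposition of $L^2$ and Schur's lemma to match irreducible submodules with automorphic representations satisfying (i) and (ii).

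For the first stage, left $G$-invariance of $\Psi(f)(v)$ in the $(\ghat,g_\infty)$-argument follows from $f(\gamma\ghat)=\rho(\gamma)f(\ghat)$ combined with the $G_\infty$-invariance of $\langle\,,\rangle$ applied to the image of $\gamma$ under the diagonal inclusion $G\hookrightarrow G_\infty$, while $G_\infty$-equivariance in $W_\infty$ is immediate from the right-translation convention on $L^2$. Hecke equivariance then reduces to a short calculation: using the decomposition $\Khat\phat\Khat=\bigsqcup_j\phat_j\Khat$ from (\ref{KpKdoubletoright}) together with the formula (\ref{hecke1}), one obtains $\Psi_v(T(\phat)f)(\ghat,g_\infty)=\sum_j\langle\rho(g_\infty)v,f(\ghat\phat_j)\rangle=(T(\phat)\Psi_v(f))(\ghat,g_\infty)$.

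For the second stage, I would construct an explicit inverse to $\Psi$: given $\Phi\in\Hom_{G_\infty}(W_\infty,L^2)$, define $f_\Phi(\ghat)\in W$ as the unique element with $\langle v,f_\Phi(\ghat)\rangle=\Phi(v)(\ghat,1)$ for all $v\in W_\infty$. Non-degeneracy of $\langle\,,\rangle$ makes this well-defined, and $G$-equivariance of $f_\Phi$ follows by combining left $G$-invariance of $\Phi$ with its $G_\infty$-equivariance. Since $G_\infty$ is compact, every irreducible unitary $G_\infty$-representation is finite-dimensional, and a Peter-Weyl-type argument shows that $L^2(G\backslash\sfG(\bdA_F))$ decomposes as a Hilbert direct sum $\bigoplus_\pi m(\pi)\pi$ over irreducible automorphic representations $\pi=\pi_\infty\otimes\pi_f$ with finite multiplicities. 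Applying Schur to $\Hom_{G_\infty}(W_\infty,\pi_\infty\otimes\pi_f)$ yields $\pi_f$ when $\pi_\infty\cong\rho_\infty$ and zero otherwise, so $M(W)\cong\bigoplus_{\pi_\infty\cong\rho_\infty}m(\pi)\pi_f$ as $\Ghat$-modules; passing to $\Khat$-fixed vectors gives $M(W,\Khat)\cong\bigoplus m(\pi)\pi_f^{\Khat}$ with the sum restricted further to $\pi$ with $\pi_f^{\Khat}\neq 0$.

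For the third stage, to recover $\Psi_v$ from $\Psi$ observe that for nonzero $v$ the evaluation $\phi\mapsto\phi(v)$ is an $\cH(\Khat)$-equivariant isomorphism on each summand where $\pi_\infty\cong\rho_\infty$ (again by Schur, since $\End_{G_\infty}(W_\infty)$ is one-dimensional), so $\Psi_v$ induces the same matching of submodules as $\Psi$. The main obstacle is then the assertion that each $\pi_f^{\Khat}$ is irreducible as an $\cH(\Khat)$-module whenever nonzero, so that irreducible $\cH(\Khat)$-summands of $M(W,\Khat)$ are in bijection with the $\pi$'s themselves rather than with pairs $(\pi,V)$ for $V$ an irreducible constituent of $\pi_f^{\Khat}$; this is a standard but nontrivial input from admissibility and the structure theory of the Hecke algebra (of Bernstein type), and with it in hand the proposition follows from the decomposition above.
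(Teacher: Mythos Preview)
The paper states this proposition without proof; it appears at the end of \S 2 as background and motivation, followed immediately by the remark ``In particular, an $\cH(\Khat)$ eigenvector $f\in M(W,\Khat)$ gives rise to an automorphic representation.'' So there is no paper proof to compare against.

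Your outline is the standard argument and is essentially correct. Two small technical points are worth flagging. First, in your second stage the inverse $\Phi\mapsto f_\Phi$ uses pointwise evaluation $\Phi(v)(\ghat,1)$, but $\Phi(v)$ is only an $L^2$-class, so this is not a priori well-defined; the clean fix is to note that $G\backslash(\Ghat\times G_\infty)$ is compact (since $G_\infty$ is compact and $G\backslash\Ghat/\Khat$ is finite by Proposition~\ref{Yisfinite}), so one may first decompose $L^2$ discretely and work with $K$-finite, hence genuine, functions---or equivalently restrict from the start to the $\Khat$-smooth part of $\Hom_{G_\infty}(W_\infty,L^2)$. Second, you correctly isolate the one nontrivial representation-theoretic input, namely that $\pi_f^{\Khat}$ is an irreducible $\cH(\Khat)$-module whenever it is nonzero; note that the literal ``bijection'' in the statement also tacitly assumes that distinct automorphic $\pi$ with $\pi_\infty\cong\rho_\infty$ yield distinct (not merely non-isomorphic) irreducible submodules, which amounts to a multiplicity-one assertion of the same flavor. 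With those caveats your three-stage plan goes through.
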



In particular, an $\cH(\Khat)$ eigenvector $f\in M(W,\Khat)$ gives rise to an automorphic representation.  Since automorphic representations are of such fundamental importance, explicit methods to decompose $M(W,\Khat)$ into its Hecke eigenspaces are of significant interest.  

\section{Hermitian forms, classical groups, and lattices}

Having set up the general theory in the previous section, we now specialize to the case of orthogonal and unitary groups.  In this section, we introduce these classical groups; basic references are Borel \cite{Borel} and Humphreys \cite{Humphreys}.  


\subsection*{Classical groups}

Let $F$ be a field with $\opchar F \neq 2$ and let $L$ be a commutative \'etale $F$-algebra equipped with an involution $\overline{\phantom{x}}:L\to L$ such that $F$ is the fixed field of $L$ under $\overline{\phantom{x}}$.  
Then there are exactly three possibilities for $L$:
\begin{enumerate}
\item $L=F$ and $\overline{\phantom{x}}$ is the identity;
\item $L$ is a quadratic field 
extension of $F$ and $\overline{\phantom{x}}$ is the nontrivial element of $\Gal(L/F)$; or
\item[$2'$.] $L \cong F\times F$ and $\overline{(b,a)}=(a,b)$ for all $(a,b) \in F \times F$.
\end{enumerate}
(As \'etale algebras, cases $2$ and $2'$) look the same, but we will have recourse to single out the split case.)

Let $V$ be a finite-dimensional vector space over $L$.  
Let
\[
\phi : V\times V\lra L
\]
be a \emph{Hermitian form} relative to $L/F$, so that:
\begin{enumroman}
\item $\phi( x+y, z) = \phi(  x,z) + \phi( y,z)$ for all $x,y,z \in V$;
\item $\phi( ax,y)=a\phi( x,y)$ for all $x,y \in V$ and $a \in L$; and
\item $\phi(y,x)=\overline{\phi(x,y)}$ for all $x,y \in V$.  
\end{enumroman}
Further suppose that $\phi$ is \emph{nondegenerate}, so $\phi(x,V)=\{0\}$ for $x \in V$ implies $x=0$.  For example, the \emph{standard} nondegenerate Hermitian form on $V=L^n$ is
\begin{equation} \label{stdform}
\phi(x, y) = \sum_{i=1}^n x_i \overline{y_i}.
\end{equation}

Let $\sfA$ be the (linear) algebraic group of automorphisms of $(V,\phi)$ over $F$: that is to say, for a commutative $F$-algebra $D$, we have
\[ \sfA(D)=\Aut_{D \otimes_F L}(V_F \otimes_F D, \phi). \]
(Note the tensor product is over $F$, so in particular we consider $V$ as an $F$-vector space and write $V_F$.)  More explicitly, we have
\[
\sfA(F) = \Aut_L(V, \phi) = \{T\in \GL(V) : \phi( Tx,Ty )=\phi(x,y)\}.
\]
Since $\phi$ is nondegenerate, for every linear map $T:V\to V$, there is a unique linear map $T^*:V\to V$ such that
\[
\phi(Tx,y)=\phi(x,T^*y)
\]
for all $x,y \in V$.  It follows that
\[
\sfA(F) = \{T\in \GL(V) : TT^*=1\}
\]
where the ${}^*$ depends on $\phi$.

The group $\sfA$ is reductive but is not connected in case $1$.  Let $\sfG$ be the connected component of the identity in $\sfA$.

In each of the three cases, we have the following description of $\sfG \leq \sfA$.


\begin{enumerate}
\item If $L=F$, then $\phi$ is a symmetric bilinear form over $F$ and $\sfG=\mathsf{SO}(\phi) \leq \mathsf{O}(\phi)=\sfA$ are the special orthogonal and orthogonal group of the form $\phi$.

\item If $L$ is a quadratic field extension of $F$, then $\phi$ is a Hermitian form with respect to $L/F$ and $\sfG=\mathsf{U}(\phi)=\sfA$ is the unitary group associated to $\phi$.    

\item[$2'$.] If $L=F\times F$, then actually we obtain a general linear group.  Indeed, let $e_1=(1,0)$ and $e_2=(0,1)$ be an $F$-basis of idempotents of $L$.  Then $V_1=e_1 V$ and $V_2=e_2 V$ are vector spaces over $F$, and the map $T \mapsto T|_{V_1}$ gives an isomorphism of $\sfG=\sfA$ onto $\mathsf{GL}(V_1)$.  
\end{enumerate}

\begin{remark}
It would also be profitable to consider other groups of symmetries of $(V,\phi)$, for example, the spin group $\textsf{Spin}(\phi)$ in case 1 and the special unitary group $\mathsf{SU}(\phi)$ in case 2.  We have simply made one such choice for the purposes of this article.
\end{remark}

\begin{remark}
To obtain symplectic or skew-Hermitian forms, we would work instead with signed Hermitian forms above.
\end{remark}

\begin{remark}
We have phrased the above in terms of Hermitian forms, but one could instead work with their associated quadratic forms $Q:V \to L$ defined by $Q(v)=\phi(v,v)$.  In characteristic $2$, working with quadratic forms has some advantages, but in any case we will be working in situations where the two perspectives are equivalent.
\end{remark}

\subsection*{Integral structure}\label{SS:global}

Suppose now that $F$ is a number field with ring of integers $\Z_F$.  By a \emph{prime} of $F$ we mean a nonzero prime ideal of $\Z_F$.

Let $(V,\phi)$ and $\sfG \leq \sfA$ be as above.  Since our goal is the calculation of algebraic modular forms, we insist that $G_\infty=\sfG(F_\infty)=\sfG(F \otimes_{\Q} \R)$ be compact, which rules out the case $2'$ (that $L=F\times F$) and requires that $F$ be totally real.  

Let $\Z_L$ be the ring of integers of $L$.  Let $\Lambda \subset V$ be a \emph{lattice} in $V$, a projective $\Z_L$-module with rank equal to the dimension of $V$.  Suppose further that $\Lambda$ is \emph{integral}, so $\phi(\Lambda,\Lambda)\subseteq \Z_L$. 
Define the \emph{dual lattice} by
\[ \Lambda^{\#} = \{ x \in V : \phi(\Lambda, x) \subseteq \Z_L\}. \]
We say $\Lambda$ is \emph{unimodular} if $\Lambda^{\#}=\Lambda$.  

To a lattice $\Pi \subseteq V$ we associate the the lattice 
\[ \widehat{\Pi} = \Pi \otimes_{\Z_L} \widehat{\Z}_L \subset  \widehat{V} = V \otimes_{L} \widehat{L} \]
with $\Pi_\fp = \Pi \otimes_{Z_L} \Z_{L,\fp}$; we have $\Pi \otimes_{\Z_L} \Z_{L,\fp} = \Lambda \otimes_{\Z_L} \Z_{L,\fp}$ for all but finitely primes $\fp$.  Conversely, given a lattice $(\Pi_\fp)_\fp \subseteq \widehat{V}$ with $\Pi_\fp=\Lambda \otimes_{\Z_L} \Z_{L,\fp}$ for all but finitely many $\fp$, we obtain a lattice 
\[ \Pi = \{ x \in V : x \in \Pi_\fp \text{ for all $\fp$}\}. \]
(In fact, one can take this intersection over all localizations in $V$, not completions, but we do not want to confuse notation.)  These associations are mutually inverse to one another (\emph{weak approximation}), so we write $\widehat{\Pi} = (\Pi_\fp)_{\fp}$ unambiguously.

Let 
\begin{equation} \label{stabKhat}
\Khat = \{\ghat \in \Ghat : \ghat \widehat{\Lambda} = \widehat{\Lambda}\}
\end{equation}
be the stabilizer of $\widehat{\Lambda}$ in $\Ghat$.  Then $\Khat$ is an open compact subgroup of $\Ghat$.  Further, let 
\[ \Gamma = \{g \in G : g \Lambda = \Lambda\} \]
be the stabilizer of $\Lambda$ in $G$.  Then the group $\Gamma$ is finite, since it is a discrete subgroup of the compact group $G_\infty$ \cite[Proposition 1.4]{Gross1}.  

\begin{remark}
In fact, by work of Gan and Yu \cite[Proposition 3.7]{GY}, there is a unique smooth linear algebraic group $\underline{\sfA}$ over $\Z_F$ with generic fiber $\sfA$ such that for any commutative $\Z_F$-algebra $D$ we have
\[ \underline{\sfA}(D) = \Aut_{D \otimes_{\Z_F} \Z_L}(\Lambda \otimes_{\Z_L} D, \phi).  \] 
As we will not make use of this, we do not pursue integral models of $\sfA$ any further here.
\end{remark}

We now consider the extent to which a lattice is determined by all of its localizations in this way: this extent is measured by the genus, which is in turn is given by a double coset as in Section 2, as follows.

\begin{definition}  
Let $\Lambda$ and $\Pi$ be lattices in $V$.  We say $\Lambda$ and $\Pi$ are ($G$-)\emph{equivalent} (or \emph{isometric}) if there exists $\gamma\in G$ such that $\gamma \Lambda=\Pi$.  
We say $\Lambda$ and $\Pi$ are \emph{locally equivalent} (or \emph{locally isometric}) if there exists $\ghat \in \Ghat$ such that
$\ghat \widehat{\Lambda} = \widehat{\Pi}$.  The set of all lattices locally equivalent to $\Lambda$ is called the \emph{genus} of $\Lambda$ and is denoted $\gen(\Lambda)$.
\end{definition}

For any $\ghat=(g_\fp)_\fp \in \Ghat$, we have
\[ \ghat \Lambdahat = \prod_{\fp} g_\fp \Lambda_\fp; \]
since $g_\fp \Lambda_\fp = \Lambda_\fp$ for all but finitely many $\fp$, by weak approximation, there is a unique lattice $\Pi \subseteq V$ such that $\widehat{\Pi} = \ghat \Lambdahat$.  By definition, $\Pi\in\gen(\Lambda)$ and every lattice $\Pi \in \gen(\Lambda)$ arises in this way.
Thus, the rule 
\[
(\ghat,\Lambda)\mapsto \Pi = \ghat \Lambda 
\] 
gives an action of $\Ghat$ on $\gen(\Lambda)$.  The stabilizer of $\Lambda$ under this action is by definition $\Khat$, therefore the mapping
\begin{align*}
\Ghat/\Khat &\to \gen(\Lambda) \\
\ghat \Khat &\mapsto \ghat \Lambda
\end{align*}
is a bijection of $G$-sets.  The set $G\backslash \gen(\Lambda)$ of isometry classes of lattices in $\gen(\Lambda)$ is therefore in bijection with the double-coset space (\emph{class set})
\[
Y=G\backslash \Ghat/\Khat.
\]
By Proposition \ref{Yisfinite} (or a direct argument, e.g.\ Iyanaga \cite[6.4]{Iyanaga1} for the Hermitian case), the genus $\gen(\Lambda)$ is the union of finitely many equivalence classes called the \emph{class number} of $\Lambda$, denoted $h=h(\Lambda)$.

In this way, we have shown that an algebraic modular form $f\in M(W,\Khat)$ can be viewed as a function on $\gen(\Lambda)$.  Translating the results of Section 1 in this context, if $\Lambda_1,\ldots,\Lambda_h$ are representatives for the equivalence classes in $\gen(\Lambda)$, then a map $f:\gen(\Lambda) \to W$ is determined by the finite set $f(\Lambda_1),\ldots,f(\Lambda_h)$ of elements of $W$.  
The problem of enumerating this system of representatives for $Y$ becomes the problem of enumerating representatives for the equivalence classes in $\gen(\Lambda)$, a problem which we will turn to in Section 5 after some preliminary discussion of elementary divisors in Section 4.

\section{Elementary divisors} \label{sec:Hecke}


In this section, we give the basic setup between elementary divisors and Hecke operators, providing a link to the neighbor method in the lattice setting.  The results are standard.  We work in the local case.  

Let $F$ be a local field of mixed characteristic with ring of integers $\Z_F$, let $\phi$ be a Hermitian form on $V$ relative to $L/F$, and let $\Z_L$ be the integral closure of $\Z_F$ in $L$ with uniformizer $P$. 

Suppose that $G$ is split, and let $T_s \subset G$ be a maximal split torus in $G$.  Let $W_s$ be the Weyl group of $(G_s,T_s)$.  Let
\[
X_*(T_s)=\Hom(T_s,\mathbb{G}_m)
\quad\text{and}\quad
X^*(T_s)=\Hom(\mathbb{G}_m,T_s)
\]
be the groups of characters and cocharacters of $T_s$, respectively.

\begin{theorem}
The Cartan decomposition holds:
\[
G = \bigsqcup_{\lambda\in X^*(T_s)/W_s} K \lambda(P) K.
\]
\end{theorem}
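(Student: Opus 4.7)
The plan is to prove the decomposition in two steps: (i) \emph{covering}, that every $g \in G$ lies in $K\lambda(P)K$ for some $\lambda \in X^*(T_s)$; and (ii) \emph{disjointness}, that $K\lambda(P)K = K\mu(P)K$ forces $\mu \in W_s \cdot \lambda$.

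For covering, I begin from the Iwasawa decomposition $G = K \cdot B(F)$, where $B = T_s U$ is a Borel subgroup with unipotent radical $U$. Writing $g = k t u$ with $k \in K$, $t \in T_s(F)$, and $u \in U(F)$, the split structure on $T_s$ gives an isomorphism $X^*(T_s) \xrightarrow{\sim} T_s(F)/T_s(\Z_F)$ via $\lambda \mapsto \lambda(P)$, so $t = t_0 \lambda(P)$ for some $t_0 \in T_s(\Z_F) \subset K$. After replacing $\lambda$ by a Weyl-conjugate (the Weyl group is represented in $K$, since $K$ is hyperspecial and $T_s$ is split), I may arrange $\lambda$ to be antidominant, so that conjugation by $\lambda(P)$ contracts $U(F)$ into a deep congruence subgroup of $U(\Z_F) \subset K$. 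A filtration argument along the root subgroups of $U$ then absorbs $u$ into $K$ on the right, yielding $g \in K \lambda(P) K$.

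For disjointness, the cleanest route is via elementary divisors, which is exactly what Section 4 sets up. Fix a faithful representation $\rho: \sfG \hookrightarrow \GL(V)$ together with a $K$-stable lattice $L \subset V$. If $K \lambda(P) K = K \mu(P) K$, then $\rho(\lambda(P)) L$ and $\rho(\mu(P)) L$ have equal invariant factors relative to $L$, since the Smith normal form is a complete invariant of $\GL(L) \times \GL(L)$-orbits and $\rho(K) \subset \GL(L)$. Decomposing $V$ into $T_s$-weight spaces shows that the invariant factors of $\rho(\lambda(P)) L$ form the multiset $\{P^{\langle \chi, \lambda \rangle}\}$ indexed by the weights $\chi \in X_*(T_s)$ of $V$, counted with multiplicity. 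Taking $V$ to be a direct sum of fundamental representations, one recovers $\langle \omega_i, \lambda\rangle$ for each fundamental weight $\omega_i$ as the maximum entry in the corresponding multiset, and this data pins down the dominant representative of the $W_s$-orbit of $\lambda$. Hence $\lambda$ and $\mu$ are Weyl-conjugate.

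The main obstacle is the absorption step in covering: making the contraction argument completely rigorous usually requires either invoking Bruhat--Tits theory or a careful hand calculation exploiting that the hyperspecial $K$ contains full congruence neighborhoods of the identity in each root subgroup. For the paper's purposes this is not too painful, since the orthogonal and unitary groups considered are classical enough that the whole argument can be rephrased concretely in terms of Smith normal form acting on matrices, which is the perspective taken in the following section.
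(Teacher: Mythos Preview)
The paper does not prove this theorem. Section~4 opens with ``The results are standard'' and simply states the Cartan decomposition, then records its concrete incarnations (the elementary-divisor Propositions~\ref{elemdiv-split} and~\ref{elemdiv-nonsplit}) also without proof. So there is nothing in the paper to compare your argument against; the authors are quoting a known structural fact and then spelling out what it says in coordinates for $\GL_n$ and for orthogonal and unitary groups.

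That said, your covering argument contains a genuine error, not merely a gap to be filled by ``careful hand calculation.'' You claim that, with $\lambda$ antidominant, ``conjugation by $\lambda(P)$ contracts $U(F)$ into a deep congruence subgroup of $U(\Z_F)$.'' This is false: conjugation by $\lambda(P)$ is an automorphism of the noncompact group $U(F)$ and cannot carry it into anything bounded. On a root subgroup one has $\lambda(P)\,u_\alpha(x)\,\lambda(P)^{-1} = u_\alpha(P^{\langle\alpha,\lambda\rangle}x)$, which shifts the valuation of $x$ by a fixed integer and does not force an arbitrary $x \in F$ into $\Z_F$. The Iwasawa decomposition hands you $g = k\,t\,u$ with $u \in U(F)$ completely unconstrained, and no single conjugation absorbs it. What you can absorb is $u \in U(\Z_F)$, but Iwasawa does not give you that. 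Correct general proofs go through the Bruhat--Tits building (the double cosets $K\backslash G/K$ parametrize relative positions of two hyperspecial vertices), or, for the classical groups at hand, through a direct Smith-normal-form reduction on matrices---which is exactly the content of the elementary-divisor propositions the paper states next, and which does not route through Iwasawa at all.

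Your disjointness argument, by contrast, is essentially correct: the invariant-factor multiset of $\rho(\lambda(P))L$ depends only on the $W_s$-orbit of $\lambda$ (since $W_s$ permutes the weights of any representation), and taking $V$ to contain each fundamental representation lets you read off the dominant representative from the maxima.
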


There is a standard method for producing a fundamental domain for the acton of $W_s$ on $X_*(T_s)$, allowing for a more explicit statement of the Cartan decomposition.  Let $\Phi^+\subseteq X^*(T_s)$ be a set of positive roots and let
\[
Y_s^+ = \{\lambda \in X_*(T_s) : \text{$\lambda(\alpha)\geq 0$ for all $\alpha\in\Phi^+$}\}.
\]

\begin{proposition} \label{Yplus}
$Y^+$ is a fundamental domain for the action of $W_s$ on $X_*(T_s)$.  Therefore, we have
\[
G =\bigsqcup_{\lambda\in Y^+}K \lambda(P) K.
\]
\end{proposition}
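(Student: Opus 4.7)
The proof splits naturally into two parts: (1) a root-theoretic verification that $Y^+$ parametrizes the $W_s$-orbits on $X_*(T_s)$, and (2) a combination of this with the Cartan decomposition from the preceding theorem.

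For Part 1, I would work in the real vector space $V = X_*(T_s) \otimes_{\Z} \R$, on which $W_s$ acts as a finite reflection group generated by the reflections $s_\alpha$ through the hyperplanes $\{v : \langle \alpha, v\rangle = 0\}$ for $\alpha \in \Phi^+$. The positive system cuts out the (open) fundamental Weyl chamber $C^+ = \{v \in V : \langle \alpha, v\rangle > 0 \text{ for all } \alpha \in \Phi^+\}$, and $Y^+ = \overline{C^+} \cap X_*(T_s)$. One then establishes:

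\emph{Existence.} Every $W_s$-orbit in $X_*(T_s)$ meets $Y^+$. Given $\mu \in X_*(T_s)$, if $\langle \alpha, \mu\rangle < 0$ for some simple $\alpha$, replace $\mu$ by $s_\alpha \mu$; pairing with $\rho = \tfrac{1}{2}\sum_{\alpha \in \Phi^+} \alpha$ strictly increases under such a step, and since the $W_s$-orbit of $\mu$ is finite this process terminates inside $Y^+$.

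\emph{Uniqueness.} If $\lambda, w\lambda \in Y^+$ for some $w \in W_s$, then $\lambda = w\lambda$. I would argue by induction on the length of $w$ with respect to the simple reflections: any reduced expression $w = s_{\alpha_1} \cdots s_{\alpha_k}$ with $\lambda$ dominant and $w\lambda$ dominant must have $\langle \alpha_k, \lambda\rangle = 0$ (else one factor could be removed while keeping the image dominant), so $s_{\alpha_k}\lambda = \lambda$; iterating gives $w\lambda = \lambda$.

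For Part 2, the bijection $Y^+ \xrightarrow{\sim} X_*(T_s)/W_s$ from Part 1 lets us rewrite the Cartan decomposition with $\lambda$ ranging over $Y^+$. For this rewriting to be meaningful we need $K\lambda(P)K$ to be independent of the choice of representative in the $W_s$-orbit: this follows because $W_s = N_G(T_s)(F) / T_s(F)$ is realized by elements of $K$ (one has representatives in $N_G(T_s)(\Z_F) \subset K$, since $G$ is split and $K$ is hyperspecial in the sense relevant to the Cartan decomposition), so $w\lambda(P) = n\lambda(P) n^{-1}$ for some $n \in K$, giving $K w\lambda(P) K = K\lambda(P) K$. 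The disjointness in Part 1 then transfers immediately to the disjointness of double cosets.

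The main obstacle is the uniqueness step in Part 1: the existence side is just a termination argument, but uniqueness requires the standard (and somewhat subtle) reflection-group lemma that an element carrying one dominant vector to another must fix it. The one further point to take care with is verifying that the $W_s$-action on cocharacters really is realized inside $K$ under our running split assumption, so that Part 2 actually yields a disjoint union.
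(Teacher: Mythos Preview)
The paper does not prove this proposition; it is stated as a standard fact, introduced only by the sentence ``There is a standard method for producing a fundamental domain for the action of $W_s$ on $X_*(T_s)$.'' Your argument is correct and is precisely the textbook one from the theory of finite reflection groups.

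One minor redundancy: your Part~2 discussion of why $K\lambda(P)K$ depends only on the $W_s$-orbit of $\lambda$ is already built into the Cartan decomposition theorem as the paper states it (the index set there is $X_*(T_s)/W_s$, with disjointness asserted). Once Part~1 supplies the bijection $Y^+ \xrightarrow{\sim} X_*(T_s)/W_s$, the displayed decomposition over $Y^+$ follows immediately. Also, the parenthetical justification in your uniqueness step is slightly loose; the clean version is that a reduced expression $w = s_{\alpha_1}\cdots s_{\alpha_k}$ satisfies $w\alpha_k < 0$, so dominance of $w\lambda$ forces $\langle \alpha_k,\lambda\rangle \leq 0$, hence $\langle \alpha_k,\lambda\rangle = 0$, and one inducts on length.
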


We now proceed to analyze this decomposition in Proposition \ref{Yplus} explicitly in our situation.  We suppose that $\Lambda$ is unimodular (in our methods, we will consider the completions at primes not dividing the discriminant), and we consider two cases.

We first consider the split case ($2'$) with $L\cong F \times F$.  Let 
\[
V_1=e_1V \cong F^n \quad\text{and}\quad \Lambda_1=e_1\Lambda \cong \Z_F^n.
\]
Recall that $T\mapsto T|_{V_1}$ identifies $G=\sfG(F)$ with $\GL(V_1)$.  Already having described $V_1$ and $\Lambda_1$ in terms of coordinates we have 
\[
G=\GL_d(F)\quad\text{and}\quad K=\GL_d(\Z_F).
\] 
Let $T \leq G$ be the subgroup of diagonal matrices, consisting of elements $t=\diag(t_1,\dots,t_n)$.  For $i=1,\dots,n$, define the cocharacter $\lambda_i:F^\times\to T$ by
\[
\lambda_i(a)=\diag(1,\ldots,1,a,1,\ldots,1),
\]
where $a$ occurs in the $i$-th component.  Then
\[
Y_s^+=\{\lambda_1^{r_1}\cdots\lambda_n^{r_n} : r_1\leq \cdots \leq r_n\}.
\]

\begin{proposition}[Elementary divisors; split case] \label{elemdiv-split}
Let $\Lambda$ and $\Pi$ be unimodular lattices in $V$ with $L \cong F \times F$.  Then there is a basis 
\[
e_1,\ldots,e_n
\]
of $\Lambda$ and integers 
\[
r_1\leq \cdots\leq r_n
\]
such that
\[
(\overline{P}/P)^{r_1}e_1,\ldots,(\overline{P}/P)^{r_n}e_n
\]
is a basis of $\Pi$.  Moreover, the sequence $r_1\leq \cdots \leq r_n$ is uniquely determined by $\Lambda$ and $\Pi$.
\end{proposition}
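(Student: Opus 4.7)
The plan is to reduce to the classical Smith normal form for $\GL_n$ over the DVR $\Z_F$, using the splitting $L \cong F \times F$ to convert the Hermitian unimodularity condition into a duality statement between $\Z_F$-lattices in $V_1$.

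First I would decompose $V = V_1 \oplus V_2$ with $V_i = e_i V$, so that any $\Z_L$-lattice $\Lambda \subset V$ decomposes as $\Lambda = \Lambda_1 \oplus \Lambda_2$ with $\Lambda_i = e_i \Lambda$ a $\Z_F$-lattice in $V_i$. A short computation shows that the Hermitian form $\phi$ vanishes on $V_1 \times V_1$ and $V_2 \times V_2$, and that its restriction pairs $V_1$ with $V_2$, yielding (after projecting onto the $e_1$-component of $L$) a perfect $F$-bilinear pairing $\phi_0 : V_1 \times V_2 \to F$. The unimodularity condition $\Lambda^{\#} = \Lambda$ translates into the statement that $\Lambda_2$ is the $\Z_F$-dual of $\Lambda_1$ under $\phi_0$, and similarly for $\Pi$.

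Next, with $\pi$ a uniformizer of $\Z_F$ and writing $P = (\pi, 1) \in \Z_L$ (so that $\overline{P}/P$ acts on $V_1$ as multiplication by $\pi^{-1}$ and on $V_2$ as multiplication by $\pi$), I would apply the elementary divisor theorem for the DVR $\Z_F$ to the pair of lattices $\Lambda_1, \Pi_1 \subset V_1$. This produces a basis $f_1, \ldots, f_n$ of $\Lambda_1$ and integers $r_1 \leq \cdots \leq r_n$ such that $\pi^{-r_1} f_1, \ldots, \pi^{-r_n} f_n$ is a basis of $\Pi_1$ (the sign and ordering are cosmetic choices). Let $f_1^*, \ldots, f_n^* \in V_2$ be the basis of $\Lambda_2$ dual to $f_1, \ldots, f_n$ under $\phi_0$; by duality applied to $\Pi$, the dual basis $\pi^{r_1} f_1^*, \ldots, \pi^{r_n} f_n^*$ is precisely a basis of $\Pi_2$.

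Finally I would assemble a $\Z_L$-basis by setting $e_i = f_i + f_i^* \in \Lambda$; these form a $\Z_L$-basis of $\Lambda$ because their projections onto $V_1$ and $V_2$ give $\Z_F$-bases of $\Lambda_1$ and $\Lambda_2$, respectively. The computation $(\overline{P}/P)^{r_i} e_i = \pi^{-r_i} f_i + \pi^{r_i} f_i^*$ then exhibits a $\Z_L$-basis of $\Pi$, exactly as required. Uniqueness of the sequence $r_1 \leq \cdots \leq r_n$ follows from the uniqueness part of Smith normal form applied to $\Lambda_1, \Pi_1$; invariantly, the $r_i$ record the invariant factors of the relative position of $\Lambda$ and $\Pi$. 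The only real subtlety — which is less an obstacle than a bookkeeping point — is making the sign conventions for $P$ versus $\overline{P}$ consistent with the action of $\overline{P}/P$ on the two summands $V_1, V_2$, and in verifying that the $\Z_L$-duality pairing on $V$ restricts to the $\Z_F$-duality on $V_1 \times V_2$ so that the dual of a Smith basis is again a Smith basis with inverted exponents.
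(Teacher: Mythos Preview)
Your proposal is correct and follows essentially the same route as the paper: both reduce to the elementary divisor theorem (equivalently, the Cartan decomposition) for $\GL_n$ over the DVR $\Z_F$, via the identification $T \mapsto T|_{V_1}$. The paper does not write out a proof, merely recording the proposition as the lattice-theoretic restatement of the Cartan decomposition $G = \bigsqcup_{\lambda \in Y_s^+} K\lambda(P)K$ for $\GL_n$; you have unpacked the translation explicitly, in particular by spelling out how unimodularity of $\Lambda$ forces $\Lambda_2 = \Lambda_1^{\#}$ under the induced pairing $\phi_0$, so that a Smith basis for the pair $(\Lambda_1,\Pi_1)$ automatically dualizes to one for $(\Lambda_2,\Pi_2)$ and assembles into the desired $\Z_L$-basis.
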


Now we consider the more difficult cases (1) and (2), which we can consider uniformly.  We have that either $L=F$ or the maximal ideal of $\Z_F$ is inert or ramified in $\Z_L$.  Let $\nu=\nu(\phi)$ be the \emph{Witt index} of $(V,\phi)$, the dimension of a maximal isotropic subspace.  Then $\nu=\dim T_s \leq n/2$ and $V$ admits a basis of the form
\[
e_1,\ldots,e_{\nu},g_1,\ldots,g_{n-2\nu}, f_1,\ldots,f_{\nu}
\]
such that
\[
\phi(e_i,e_j)=\phi(f_i,f_j)=0\quad\text{and}\quad \phi(e_i,f_j)=\delta_{ij}.
\]
In this basis, the matrix $\phi$ 
is
\[
A(\phi)=\begin{pmatrix}
& & I\\
&\big(\phi(g_i,g_j)\big)_{i,j} & \\
I & &
\end{pmatrix}
\]
where $I$ is the $\nu \times \nu$ identity matrix.  The set of matrices of the form
\[
\begin{pmatrix}
\diag(t_1,\ldots,t_{\nu}) \\ & I \\ & & \diag(\overline{t}_1,\ldots,\overline{t}_{\nu})^{-1}
\end{pmatrix}
\]
constitute a maximal split torus in $G$.  Considering $\lambda_i$ as a cocharacter of $\GL_{\nu}(F)$ as above, define
\begin{align*}
\mu_i:F^\times &\to T  \\
a &\mapsto \mu_i(a)=\begin{pmatrix}\lambda_i(a)\\ &I\\ & & \lambda_i(\overline{a})^{-1}\end{pmatrix}.
\end{align*}
With these choices, we have
\[
Y_s^+=\{\mu_1^{r_1}\cdots\mu_{\nu}^{r_n} : r_1\leq \cdots\leq r_\nu\}.
\]

\begin{proposition}[Elementary divisors; nonsplit case] \label{elemdiv-nonsplit}
Let $\Lambda$ and $\Pi$ be unimodular lattices in $V$ with $L \not\cong F \times F$.  Then there is a basis 
\[
e_1,\ldots,e_{\nu},g_1,\ldots,g_{n-2\nu},f_1,\ldots,f_{\nu}
\]
of $\Lambda$ and integers 
\[
r_1\leq \cdots\leq r_{\nu}
\]
such that
\[
\overline{P}^{r_1}e_1,\ldots,\overline{P}^{r_\nu}e_\nu,g_1,\ldots,g_j,P^{-r_1}f_1,\ldots,P^{-r_\nu}f_\nu
\]
is a basis of $\Pi$.  Moreover, the sequence $r_1\leq \cdots \leq r_\nu$ is uniquely determined by $\Lambda$ and $\Pi$.
\end{proposition}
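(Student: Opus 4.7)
The plan is to deduce both existence and uniqueness from the Cartan decomposition of Proposition~\ref{Yplus}, applied to the explicit fundamental domain
\[ Y^+_s = \{\mu_1^{r_1}\cdots\mu_\nu^{r_\nu} : r_1 \leq \cdots \leq r_\nu\} \]
together with the identification of $K$ as the stabilizer of the unimodular lattice $\Lambda$.

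The first step is to produce $g \in G$ with $g\Lambda = \Pi$. In each of the nonsplit cases --- $L = F$ symmetric, $L/F$ inert quadratic, and $L/F$ ramified quadratic --- a unimodular lattice in $(V,\phi)$ admits a Witt decomposition
\[ \Lambda = \bigoplus_{i=1}^\nu \Z_L e_i \oplus M \oplus \bigoplus_{i=1}^\nu \Z_L f_i \]
with $\phi(e_i, f_j) = \delta_{ij}$, $\phi(e_i, e_j) = \phi(f_i, f_j) = 0$, and $M$ a unimodular anisotropic kernel, uniquely determined by $(V,\phi)$ up to isometry. Consequently any two unimodular lattices in $(V,\phi)$ are $\sfG$-isometric, yielding such a $g \in G$.

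Second, apply the Cartan decomposition to $g$: there exist a unique $\mu = \mu_1^{r_1}\cdots\mu_\nu^{r_\nu} \in Y^+_s$ and $k_1, k_2 \in K$ with $g = k_1 \mu(P) k_2$. Since $k_2 \Lambda = \Lambda$, we have $k_1^{-1}\Pi = \mu(P)\Lambda$. Applying $\mu(P)$ to a Witt basis of $\Lambda$ scales each $e_i$ and $f_i$ by appropriate powers of $P$ and $\overline{P}$ and fixes each $g_j$, producing a basis of $\mu(P)\Lambda$ of the asserted form. Applying $k_1 \in K$, and noting that $k_1$ sends the chosen Witt basis of $\Lambda$ to another basis of $\Lambda$, we obtain the asserted basis of $\Pi$. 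Uniqueness of $(r_1, \ldots, r_\nu)$ is inherited from the uniqueness of the cocharacter in the Cartan decomposition, as $Y^+_s$ parametrizes a disjoint union.

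The main obstacle is the structural input that any two unimodular lattices in $(V,\phi)$ are isometric. This is immediate in the symmetric and inert unitary cases via the classification of unimodular Hermitian forms over local rings, but in the ramified unitary case it is more subtle and requires an appeal to Jacobowitz's classification of Hermitian lattices over ramified quadratic extensions. Once this fact is in hand, the remainder of the argument is an unwinding of the Cartan decomposition in the chosen coordinates and presents no essential difficulty.
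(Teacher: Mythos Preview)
Your approach is correct and is exactly the one the paper intends: the paper does not give a separate proof of this proposition but presents it as the lattice-theoretic translation of the Cartan decomposition $G=\bigsqcup_{\lambda\in Y_s^+}K\lambda(P)K$ after writing down $Y_s^+=\{\mu_1^{r_1}\cdots\mu_\nu^{r_\nu}:r_1\le\cdots\le r_\nu\}$ explicitly, calling these results ``standard.'' Your argument---produce $g\in G$ with $g\Lambda=\Pi$ via local transitivity on unimodular lattices, write $g=k_1\mu(P)k_2$, and transport the Witt basis by $k_1$---is precisely the intended unwinding, and you correctly flag that the only nontrivial input is the transitivity of $G$ on unimodular lattices (delicate in the ramified and dyadic situations; the paper itself later sidesteps dyadic primes). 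One small point you pass over: in case~1 the group is $\sfG=\mathsf{SO}(\phi)$, so after obtaining an isometry in $\mathsf{O}(\phi)$ you should note that $\mathsf{O}(\Lambda)$ contains a reflection (e.g.\ in any vector of unit norm in an orthogonal basis of the unimodular $\Lambda$), allowing you to correct the determinant and land in $G$.
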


\section{Neighbors, lattice enumeration, and Hecke operators}

In this section, we describe the enumeration of representatives for equivalence classes in the genus of a Hermitian lattice.  We develop the theory of neighbors with an eye to computing Hecke operators in the next section.

The original idea of neighbors is due to Kneser \cite{Kneser}, who wished to enumerate the genus of a (positive definite) quadratic form over $\Z$.   Schulze-Pillot \cite{Schulze-Pillot} implemented Kneser's method as an algorithm to compute the genus of ternary and quaternary quadratic forms over $\Z$, and Scharlau and Hemkemeier \cite{SH} developed effective methods to push this into higher rank.  For Hermitian forms, Iyanaga \cite{Iyanaga2} used Kneser's method to compute the class numbers of unimodular positive definite Hermitian forms over $\Z[i]$ of dimensions $\leq 7$; later Hoffmann \cite{Hoffmann} pursued the method more systematically with results for imaginary quadratic fields of discriminants $d=-3$ to $d=-20$ and Schiemann \cite{Schiemann} extended these computations further for imaginary quadratic fields (as far as $d=-455$).  For further reference on lattices, see also O'Meara \cite[Chapter VIII]{OMeara}, Knus \cite{Knus}, Shimura \cite{Shimura-unitary}, and Scharlau \cite{Scharlau}.  

\subsection*{Neighbors and invariant factors}

Let $F$ be a number field with ring of integers $\Z_F$.  Let $\Z_L$ be a field containing $F$ with $[L:F] \leq 2$, ring of integers $\Z_L$, and involution $\overline{\phantom{x}}$ with fixed field $F$.  In particular, we allow the case $L=F$ and $\Z_L=\Z_F$.  Let $\phi$ be a Hermitian form on $V$ relative to $L/F$, and let $\Lambda \subset V$ be an integral lattice.  

If $\Pi \subset V$ is another lattice, then there exists a basis $e_1,\dots,e_n$ for $V$ and fractional ideals $\fA_1,\dots,\fA_n$ and $\fB_1,\dots,\fB_n$ of $\Z_L$ such that
\[ \Lambda=\fA_1 e_1 \oplus \dots \oplus \fA_n e_n \]
and
\[ \Pi = \fB_1 e_1 \oplus \dots \oplus \fB_n e_n \]
(a direct sum, not necessarily an orthogonally direct sum) satisfying
\[ \fB_1/\fA_1 \supseteq \dots \supseteq \fB_n/\fA_n. \]
The sequence $\fB_1/\fA_1, \dots, \fB_n/\fA_n$ is uniquely determined and called the \emph{invariant factors} of $\Pi$ relative to $\Lambda$.  Note that $\Pi \subseteq \Lambda$ if and only if the invariant factors are integral ideals of $\Z_L$.

Define the fractional ideal
\[
\fd(\Lambda,\Pi) = \prod_{i=1}^n \fB_i/\fA_i
\]
and let $\fd(\Lambda)=\fd(\Lambda^{\#},\Lambda)$, where $\Lambda^{\#} \supseteq \Lambda$ is the dual lattice of $\Lambda$.  Then in fact $\overline{\fd(\Lambda)}=\fd(\Lambda)$, so $\fd(\Lambda)$ arises from an ideal over $\Z_F$, which we also denote $\fd(\Lambda)$ and call the \emph{discriminant} of $\Lambda$.  In particular, $\Lambda$ is unimodular if and only if $\fd(\Lambda)=\Z_F$, and more generaly $\Lambda_\fp = \Lambda \otimes_{\Z_F} \Z_{F,\fp}$ is unimodular whenever $\fp$ is a prime of $F$ with $\fp \nmid \fd(\Lambda)$.

\begin{definition}
Let $\fP$ be a prime of $L$ and let $k \in \Z$ with $0 \leq k \leq n$.  An integral lattice $\Pi \subset V$ is a $\fP^k$-neighbor of $\Lambda$ if $\Pi$ has $k$ invariant factors $\overline{\fP}$ and $\fP^{-1}$, i.e., invariant factors
\[ \underbrace{\overline{\fP}, \dots, \overline{\fP}}_k,S,\dots,S,\underbrace{\fP^{-1},\dots,\fP^{-1}}_k \]
if $k\leq n/2$ and
\[ \underbrace{\overline{\fP}, \dots, \overline{\fP}}_{n-k},\underbrace{\overline{\fP}\fP^{-1},\dots,\overline{\fP}\fP^{-1}}_{2k-n},\underbrace{\fP^{-1},\dots,\fP^{-1}}_{n-k} \]
if $k > n/2$ and $\fP \neq \overline{\fP}$.
\end{definition}

We require that $\fP \neq \overline{\fP}$ if $k>n/2$ because the maximal isotropic subspaces in this case have dimension $\leq n/2$, by Proposition \ref{elemdiv-nonsplit}.   It follows from a comparison of invariant factors that $\Pi$ is a $\fP^k$-neighbor of $\Lambda$ if and only if
\begin{center} 
$\Pi/(\Lambda\cap \Pi) \cong  (\Z_L/\fP)^k$ and $\Lambda/(\Lambda\cap \Pi) \cong  (\Z_L/\overline{\fP})^k$.
\end{center}
A $\fP$-neighbor $\Pi$ of $\Lambda$ has the same discriminant $\fd(\Lambda)=\fd(\Pi)$.  

\begin{remark}
One may also define $N$-neighbors for $N$ a finitely generated torsion $\Z_L$-module.
\end{remark}

\subsection*{Neighbors and isotropic subspaces}

Let $\fP$ be prime of $L$ above $\fp$ and let $\fq=\fP \overline{\fP}$.  Then $\fq=\fp$ or $\fq=\fp^2$, where $\fp$ is the prime below $\fP$.  Suppose that $\fp \nmid \fd(\Lambda)$.  Let $X \subseteq \Lambda$ be a finitely generated $\Z_L$-submodule.  We say that $X$ is \emph{isotropic modulo $\fq$} if 
\[ \phi(x,y) \in \fq \text{ for all $x,y \in X$}. \]
Define the \emph{dual} of $X$ to be
\[
X^{\#} = \{y\in V : \phi(X,y)\subseteq \Z_L\}.
\]
Then 
\[
\overline{\fP} X^{\#} = \{y\in V : \phi(X,y)\subseteq \fP\}
\]
and $\Lambda \cap \overline{\fP} X^{\#} \subseteq \Lambda \subset V$ is a lattice. 

\begin{proposition} \label{isotropisneighbor}
Let $X \subseteq \Lambda$ be isotropic modulo $\fq$.  Then
\[ \Lambda(\fP,X) = \fP^{-1} X + (\Lambda \cap \overline{\fP} X^{\#}) \]
is a $\fP^k$-neighbor of $\Lambda$, where $k=\dim X/\fP X$.  Moreover, $\Lambda(\fP,X)=\Lambda(\fP,X')$ if and only if $X/\fP X = X'/\fP X' \subseteq \Lambda/\fP \Lambda$.
\end{proposition}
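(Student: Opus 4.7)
The plan is to localize at $\fp$: at every prime $\fp' \neq \fp$, the construction leaves $\Lambda$ unchanged, so invariant factors are computed at $\fp$ only, where $\Lambda$ is unimodular by the assumption $\fp \nmid \fd(\Lambda)$. Throughout I write $\pi$ for a local uniformizer of $\fP$ and $\overline{\pi}$ for its conjugate.

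The first step is to verify that $\Lambda(\fP,X) = \fP^{-1}X + (\Lambda \cap \overline{\fP}X^{\#})$ is an integral lattice by decomposing the Hermitian pairing into three pieces: $\phi(\fP^{-1}X,\fP^{-1}X) \subseteq (\fP\overline{\fP})^{-1}\phi(X,X) \subseteq \fq^{-1}\fq = \Z_L$, which is precisely where the isotropy hypothesis $\phi(X,X) \subseteq \fq$ enters; $\phi(\fP^{-1}X,\Lambda \cap \overline{\fP}X^{\#}) \subseteq \fP^{-1}\phi(X,\overline{\fP}X^{\#}) \subseteq \fP^{-1}\fP = \Z_L$, from the defining property of $\overline{\fP}X^{\#}$ and linearity of $\phi$ in the first argument; and $\phi(\Lambda \cap \overline{\fP}X^{\#}, \Lambda \cap \overline{\fP}X^{\#}) \subseteq \phi(\Lambda,\Lambda) \subseteq \Z_L$, trivially.

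For the invariant factors, the image $\overline{X} := (X + \fP\Lambda)/\fP\Lambda$ is a $k$-dimensional isotropic subspace of $\Lambda/\fP\Lambda$. By Witt's extension theorem (combined with Proposition \ref{elemdiv-nonsplit}), I choose a hyperbolic basis $e_1,\ldots,e_\nu, g_1,\ldots,g_{n-2\nu}, f_1,\ldots,f_\nu$ of $\Lambda$ satisfying $\phi(e_i,f_j) = \delta_{ij}$ and $\phi(e_i,e_j) = \phi(f_i,f_j) = 0$, with $\overline{X}$ spanned by the reductions of $e_1,\ldots,e_k$. After replacing $X$ by the minimal lift $X = e_1\Z_L + \cdots + e_k\Z_L$ (justified by the moreover claim, treated next), $\fP^{-1}X$ has $\Z_L$-basis $\pi^{-1}e_1,\ldots,\pi^{-1}e_k$, and the relation $\phi(e_i, \sum c_j f_j) = \overline{c_i}$ shows that $\Lambda \cap \overline{\fP}X^{\#}$ is the sublattice of $\Lambda$ with $f_i$-coefficient in $\overline{\fP}$ for $i \leq k$. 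Summing yields the $\Z_L$-basis $\pi^{-1}e_1,\ldots,\pi^{-1}e_k, e_{k+1},\ldots,e_\nu, g_1,\ldots,g_{n-2\nu}, \overline{\pi}f_1,\ldots,\overline{\pi}f_k, f_{k+1},\ldots,f_\nu$ of $\Lambda(\fP,X)$, exhibiting invariant factors $\fP^{-1}$ ($k$ times), trivial, and $\overline{\fP}$ ($k$ times) relative to $\Lambda$, matching the definition of a $\fP^k$-neighbor.

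For the moreover assertion, the forward direction follows from the identity $\fP\Lambda(\fP,X) \cap \Lambda = X + \fP(\Lambda \cap \overline{\fP}X^{\#})$, whose image in $\Lambda/\fP\Lambda$ equals $\overline{X}$; thus $\Lambda$ and $\Lambda(\fP,X)$ jointly determine $\overline{X}$. For the converse, given $X, X'$ with the same image, one quickly checks $\Lambda \cap \overline{\fP}X^{\#} = \Lambda \cap \overline{\fP}X'^{\#}$ since the condition $\phi(X,z) \subseteq \fP$ for $z \in \Lambda$ depends only on $X + \fP\Lambda$ (via $\phi(x + \pi\mu, z) \equiv \phi(x,z) \pmod{\fP}$ for $\mu \in \Lambda$). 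The main obstacle is the remaining equality $\fP^{-1}X + M = \fP^{-1}X' + M$ with $M := \Lambda \cap \overline{\fP}X^{\#}$: writing a generator $x'$ of $X'$ as $x + \pi\mu$ with $x \in X$ and $\mu \in \Lambda$, one must show $\mu$ may be taken in $M$, which is precisely where the joint isotropy of $X$ and $X'$ modulo $\fq$ (rather than merely modulo $\fP$) is indispensable.
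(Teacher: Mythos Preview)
Your integrality argument is correct and parallels the paper's. The substantive difference is in how you establish the invariant factors. The paper does this directly, without choosing a special basis and without invoking the moreover clause: it first proves the claim $\Lambda\cap\Lambda(\fP,X)=\Lambda\cap\overline{\fP}X^{\#}$, then shows $\Lambda/(\Lambda\cap\overline{\fP}X^{\#})\cong(\Z_L/\overline{\fP})^k$ via surjectivity of the pairing map $y\mapsto(\phi(y,x_i)\bmod\overline{\fP})_i$, and finally computes $\Lambda(\fP,X)/(\Lambda\cap\Lambda(\fP,X))\cong\fP^{-1}X/X\cong(\Z_L/\fP)^k$. This makes the $\fP^k$-neighbor assertion logically independent of the moreover clause. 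Your route, by contrast, only goes through after you replace $X$ by the span of $e_1,\dots,e_k$, a replacement you justify by the moreover clause---so your proof of the neighbor property hinges entirely on the converse (``if'') direction of moreover.

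That direction you do not actually prove: you isolate the crux (``$\mu$ may be taken in $M$'') and stop. In fact this step fails for $k\geq 2$ when $\fP=\overline{\fP}$. Take $L=F$, $n=4$, with a hyperbolic basis $e_1,e_2,f_1,f_2$; then $X=\Z_Le_1+\Z_Le_2$ and $X'=\Z_L(e_1+\pi f_2)+\Z_L(e_2-\pi f_1)$ have the same image modulo $\fP$ and are both isotropic modulo $\fq=\fP^2$ (the cross term is $-\pi\phi(e_1,f_1)+\pi\phi(f_2,e_2)=0$), yet $\mu_1=f_2\notin M$ since $\phi(e_2,f_2)=1\notin\fP$, and one checks directly that $\pi^{-1}e_1\in\Lambda(\fP,X)\setminus\Lambda(\fP,X')$. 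So the step cannot be completed as sketched, and with it your explicit-basis computation for the neighbor property is unsupported. The remedy is to adopt the paper's quotient computation for the $\fP^k$-neighbor claim, which needs no preferred lift of $\overline{X}$. (For $k=1$ your isotropy argument \emph{does} force $\mu\in M$, so there the sketch is salvageable; the paper's own treatment of this direction of the moreover clause is likewise brief.)
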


\begin{proof}
The integrality of $\Lambda(\fP,X)$ is easy to verify using the fact that $\phi(x,y)\in\fP \overline{\fP}$ for all $x,y \in X$ and $\phi(x,y) \in \fP$ for all $x \in X$ and $y \in \Lambda \cap \overline{\fP} X^{\#}$.  

First, we prove a claim: $\Lambda\cap \Lambda(\fP,X) = \Lambda \cap \overline{\fP} X^{\#}$.  The inclusion 
($\supseteq$) is clear.  For the reverse, suppose $y \in \Lambda \cap \Lambda(\fP,X)$, so $y=v+w$ with $v \in \fP^{-1}X$ and $w \in \Lambda \cap \overline{\fP} X^{\#}$; then $v=y-w \in \Lambda$ and 
\[ \phi(X,v) \subseteq \phi(X,\fP^{-1} X)=\phi(X,X)\overline{\fP}^{-1} \subseteq \fP\overline{\fP}\, \overline{\fP}^{-1} = \fP \]
so $v \in \Lambda \cap \overline{\fP} X^{\#}$ and thus $y=v+w \in  \Lambda \cap \overline{\fP} X^{\#}$ as well.  This proves the claim.

Now, choose a $\Z_L/\fP$-basis $x_1,\dots,x_k$ for $X/\fP X \subseteq \Lambda/\fP \Lambda$.  Consider the map 
\begin{align*}
\phi(\cdot,X)=\Lambda &\to (\Z_L/\overline{\fP})^k \\
y &\mapsto (\phi(y,x_i)) + \overline{\fP}.
\end{align*}
Since $\fp$ is coprime to $\fd(\Lambda)$, the Hermitian form $\phi$ is nondegenerate modulo $\overline{\fP}$; since $X$ is totally isotropic, it follows that $\phi(\cdot,X)$ is surjective.  Since $\phi(y,x)=\overline{\phi(x,y)}$ for all $x,y \in V$, we have that $\ker \phi(\cdot, X) = \Lambda \cap \overline{\fP} X^{\#}$.
Therefore, by the claim, we have 
\begin{equation} \label{oneincl}
\Lambda/(\Lambda \cap \Lambda(\fP,X))=\Lambda/(\Lambda \cap \overline{\fP} X^{\#}) \cong (\Z_L/\overline{\fP})^k.
\end{equation}
Next, we have 
\[ \fP^{-1}X \cap \Lambda = X. \]
Therefore,
\begin{equation} \label{LXcan}
\begin{aligned}
\Lambda(\fP,X)/(\Lambda\cap \Lambda(\fP,X)) &= (\fP^{-1}X + \Lambda \cap \overline{\fP} X^{\#})/(\Lambda \cap \overline{\fP} X^{\#}) \\
&\cong \fP^{-1} X/(\fP^{-1} X \cap (\Lambda \cap \overline{\fP} X^{\#})) \\
&= \fP^{-1} X/X \cong X/\fP X
\end{aligned}
\end{equation}
and $X/\fP X \cong (\Z_L/\fP)^k$.  Together with (\ref{oneincl}), we conclude that $\Lambda(\fP,X)$ is a $\fP^k$-neighbor.

For the final statement, if $X/\fP X = X'/\fP X'$ then it is clear that $\Lambda(\fP,X)=\Lambda(\fP,X')$ as this construction only depends on $X$ modulo $\fP$; the converse follows from the fact that the identification in (\ref{LXcan}) is canonical: if $\Lambda(\fP,X)=\Lambda(\fP,X')$ then $X/\fP X = X'/\fP X'$.
\end{proof}


\begin{proposition} \label{neighborisisotrop}
Let $\Pi$ be a $\fP^k$-neighbor of $\Lambda$.  Suppose that $\fq=\fP\overline{\fP}$ is coprime to $\fd(\Lambda)$.  Then there exists $X \subseteq \Lambda$ isotropic modulo $\fP$ with $\dim X/\fP X = k$ such that $\Pi=\Lambda(\fP,X)$. 
\end{proposition}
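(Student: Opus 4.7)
The approach I would take is to work locally at $\fp$, apply the elementary divisor normal form to $(\Lambda_\fp,\Pi_\fp)$, and read $X$ off directly from that basis. The key observation is that Proposition~\ref{isotropisneighbor} is essentially reversible once one has a simultaneous hyperbolic basis: the forward construction $X\mapsto\Lambda(\fP,X)$ is entirely transparent in such a basis, and so is its inverse.

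First, I would reduce to the local situation at $\fp$. Since $\fp\nmid\fd(\Lambda)$, and since a $\fP^k$-neighbor of $\Lambda$ has the same discriminant as $\Lambda$, both $\Lambda_\fp$ and $\Pi_\fp$ are unimodular. Applying Proposition~\ref{elemdiv-nonsplit} to the pair $(\Lambda_\fp,\Pi_\fp)$ produces a hyperbolic basis
\[
e_1,\dots,e_\nu,\,g_1,\dots,g_{n-2\nu},\,f_1,\dots,f_\nu
\]
of $\Lambda_\fp$ with $\phi(e_i,f_j)=\delta_{ij}$ and $\phi(e_i,e_j)=\phi(f_i,f_j)=0$, relative to which $\Pi_\fp$ has a basis
\[
\overline{P}^{r_1}e_1,\dots,\overline{P}^{r_\nu}e_\nu,\,g_1,\dots,g_{n-2\nu},\,P^{-r_1}f_1,\dots,P^{-r_\nu}f_\nu
\]
for uniquely determined integers $r_1\leq\cdots\leq r_\nu$. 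The hypothesis that $\Pi$ is a $\fP^k$-neighbor pins down these invariants: the invariant factor pattern must consist of $k$ copies of $\overline{\fP}$, $n-2k$ trivial factors, and $k$ copies of $\fP^{-1}$, which forces exactly $k$ of the $r_i$ to equal $1$ and the rest to equal $0$. After reordering, $r_{\nu-k+1}=\cdots=r_\nu=1$.

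Next, I would set $X$ to be the $\Z_L$-span of $f_{\nu-k+1},\dots,f_\nu$. The relations $\phi(f_i,f_j)=0$ show that $X$ is totally isotropic, hence isotropic modulo $\fq$, and clearly $X/\fP X$ is free of rank $k$. A bookkeeping computation then identifies $\fP^{-1}X+(\Lambda_\fp\cap\overline{\fP}X^{\#})$ term-by-term with the listed basis of $\Pi_\fp$: the summand $\fP^{-1}X$ contributes $\fP^{-1}f_{\nu-k+1},\dots,\fP^{-1}f_\nu$, while $\Lambda_\fp\cap\overline{\fP}X^{\#}$ contributes $\Z_L e_i$ for $i\leq\nu-k$, $\overline{\fP}e_i$ for $i>\nu-k$, the $g_j$, and all the $f_i$. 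Thus $\Lambda_\fp(\fP,X)=\Pi_\fp$.

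Finally, for the local-to-global passage, I would invoke the uniqueness clause of Proposition~\ref{isotropisneighbor}: the construction $\Lambda(\fP,X)$ depends only on the image of $X$ in $\Lambda/\fP\Lambda=\Lambda_\fp/\fP\Lambda_\fp$, so any global $\Z_L$-submodule $X\subseteq\Lambda$ whose reduction modulo $\fP$ matches the isotropic subspace built locally will satisfy $\Lambda(\fP,X)=\Pi$. The genuinely substantive input is the elementary divisor theorem; the main point to get right is the dictionary between the neighbor invariant factor pattern and the exponents $r_i$. Once that is set, the rest is direct verification.
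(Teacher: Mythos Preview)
Your approach via the elementary divisor normal form differs from the paper's, which is intrinsic: the paper takes $X$ to be the $\Z_L$-span of representatives for $\fP\Pi$ modulo $\fP(\Pi\cap\Lambda)$, observes $\phi(X,X)\subseteq\fP\overline{\fP}$ directly from integrality of $\Pi$, proves the containment $\Pi\subseteq\Lambda(\fP,X)$ by hand, and concludes equality because both lattices have the same invariant factors relative to $\Lambda$. No basis is chosen and no local-to-global passage is needed.

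Your route has a genuine gap in the globalization step. The uniqueness clause of Proposition~\ref{isotropisneighbor} says $\Lambda(\fP,X)$ depends only on $X/\fP X$ \emph{under the standing hypothesis that $X$ is isotropic modulo $\fq$}; without that hypothesis the statement is false. (In a hyperbolic plane over $\Z_F$ with basis $e,f$ and $\phi(e,f)=1$, the modules $X=\Z_F f$ and $X'=\Z_F(f+pe)$ agree modulo $\fp$, yet $p^{-1}f\in\Lambda(\fp,X)\setminus\Lambda(\fp,X')$.) So lifting your totally isotropic local $X_0$ to an \emph{arbitrary} global $X\subseteq\Lambda$ with the correct reduction modulo $\fP$ does not let you conclude $\Lambda(\fP,X)=\Pi$: you must first arrange $\phi(X,X)\subseteq\fq$, and when $\fP=\overline{\fP}$ (so $\fq=\fP^2$) this is not automatic for a generic lift. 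Producing such a lift requires an additional argument---either a Hensel-type refinement using nondegeneracy of $\phi$ mod $\fP$, or the observation that lifts chosen inside $\fP\Pi$ already work, which is essentially the paper's construction. A smaller issue: you invoke only Proposition~\ref{elemdiv-nonsplit}, leaving the split case $\fP\neq\overline{\fP}$ (where Proposition~\ref{elemdiv-split} applies and the basis shape is different) unaddressed.
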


\begin{proof}
Let $X$ be the $\Z_L$-submodule of $\fP \Pi$ generated by a set of representatives for $\fP \Pi$ modulo $\fP(\Pi \cap \Lambda)$.  Then $X$ is finitely generated and $\phi(X,X) \subseteq \fP\overline{\fP}=\fq$ by the integrality of $\Pi$.  Since $\Pi$ is a $\fP$-neighbor, we have 
\[ \Pi/(\Lambda\cap \Pi) \cong (\Z_L/\fP)^k \] 
so $\fP \Pi\subseteq \Lambda\cap \Pi\subseteq \Lambda$, showing that $X \subseteq \Lambda$ and $X/\fP X \cong (\Z_L/\fP)^k$ by nondegeneracy.

Next, we prove that $\Pi \subseteq \Lambda(\fP,X)$.  If $y\in \Pi$, then by the integrality of $\Pi$,
\[
\phi(X,y)\subseteq \phi(\fP \Pi,y)=\fP \phi(\Pi,y)\subseteq \fP.
\]
Therefore, $\Lambda\cap \Pi \subseteq \Lambda \cap \overline{\fP} X^{\#}$.  But 
\[
\Lambda\cap \Pi \subsetneq \fP^{-1}X+\Lambda\cap \Pi\subseteq \Pi
\]
and
\begin{align*}
(\fP^{-1} X + (\Lambda \cap \Pi))/(\Lambda \cap \Pi) &\cong \fP^{-1} X/(\fP^{-1} X \cap (\Lambda \cap \Pi)) \\
&\cong X/(X \cap \fP(\Lambda \cap \Pi)) \cong X/\fP X
\end{align*}
by construction; since $\Pi/(\Lambda\cap \Pi) \cong (\Z_L/\fP)^k$ and $X/\fP X \cong (\Z_L/\fP)^k$, we conclude $\fP^{-1}X+(\Lambda\cap \Pi)=\Pi$.  Thus
\[ \Pi =  \fP^{-1}X + (\Lambda \cap \Pi) \subseteq \fP^{-1} + (\Lambda \cap \overline{\fP} X^{\#}) = \Lambda(\fP,X) \]
as claimed.

But now since both $\Lambda(\fP,X)$ and $\Pi$ are $\fP^k$-neighbors of $\Lambda$, they have the same invariant factors relative to $\Lambda$, so the containment $\Pi \subseteq \Lambda(\fP,X)$ implies $\Pi=\Lambda(\fP,X)$.  
\end{proof}

Putting together Propositions \ref{isotropisneighbor} and \ref{neighborisisotrop}, we obtain the following corollary.

\begin{corollary}
The map $X \mapsto \Lambda(\fP,X)$ gives a bijection between isotropic subspaces $X \subseteq \Lambda$ modulo $\fP$ with $\dim X/\fP X=k$ and $\fP^k$-neighbors of $\Lambda$.
\end{corollary}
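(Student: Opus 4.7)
The plan is to deduce this corollary directly by assembling Propositions \ref{isotropisneighbor} and \ref{neighborisisotrop}, since together they give both surjectivity and injectivity with no additional work required.

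First, I would verify that the map $X \mapsto \Lambda(\fP,X)$ is well-defined on the set of isotropic subspaces of $\Lambda/\fP\Lambda$ of dimension $k$: by Proposition \ref{isotropisneighbor}, any such $X$ (lifted to a finitely generated $\Z_L$-submodule of $\Lambda$ isotropic modulo $\fq$) produces a $\fP^k$-neighbor $\Lambda(\fP,X)$ of $\Lambda$, and the value $\Lambda(\fP,X)$ depends only on $X/\fP X$. This is really the content of the last sentence of Proposition \ref{isotropisneighbor}.

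Next, injectivity: the final clause of Proposition \ref{isotropisneighbor} asserts $\Lambda(\fP,X)=\Lambda(\fP,X')$ if and only if $X/\fP X=X'/\fP X'$ inside $\Lambda/\fP\Lambda$, which is exactly injectivity on the set of isotropic subspaces of the required dimension. For surjectivity, let $\Pi$ be an arbitrary $\fP^k$-neighbor of $\Lambda$; since $\fq=\fP\overline{\fP}$ is coprime to $\fd(\Lambda)$ (a standing hypothesis in the neighbor construction, as the localization at $\fp$ must be unimodular for the elementary divisor analysis of Section 4 to apply), Proposition \ref{neighborisisotrop} produces an $X\subseteq\Lambda$ isotropic modulo $\fP$ with $\dim X/\fP X = k$ such that $\Pi = \Lambda(\fP,X)$.

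There is essentially no obstacle here: the corollary is a packaging statement. The only point requiring a moment's thought is the compatibility of the two propositions' hypotheses on $X$ (in \ref{isotropisneighbor} one assumes isotropy modulo $\fq=\fP\overline{\fP}$, while the corollary speaks of isotropy modulo $\fP$); but any $X$ isotropic modulo $\fP$ is automatically isotropic modulo $\fq$ since $\phi(X,X)\subseteq\fP$ forces $\phi(X,X)=\overline{\phi(X,X)}\subseteq\fP\cap\overline{\fP}=\fq$ by Hermitian symmetry, so the two conditions agree and the two propositions fit together cleanly to give the bijection.
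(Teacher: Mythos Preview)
Your approach is exactly the paper's: the corollary is presented there simply as the result of ``putting together Propositions \ref{isotropisneighbor} and \ref{neighborisisotrop},'' with no further argument, and you have unpacked precisely that.

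One small correction to your final paragraph: the claim that $\fP\cap\overline{\fP}=\fq$ holds only when $\fP\neq\overline{\fP}$ (the split case). When $\fP=\overline{\fP}$ (orthogonal case $L=F$, or the inert and ramified unitary cases) one has $\fP\cap\overline{\fP}=\fP$ while $\fq=\fP\overline{\fP}=\fP^2$, so isotropy modulo $\fP$ is strictly weaker than isotropy modulo $\fq$ and your Hermitian-symmetry argument does not close the gap. This is really a looseness in the paper's own phrasing of the corollary and of Proposition \ref{neighborisisotrop}; the intended hypothesis throughout is isotropy modulo $\fq$ (as in the definition preceding Proposition \ref{isotropisneighbor} and as actually verified in the proof of Proposition \ref{neighborisisotrop}), and with that reading no reconciliation is needed.
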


From this corollary, we see that by taking a flag inside an isotropic subspace $X$ with $\dim X/\fP X=k$, every $\fP^k$-neighbor $\Pi$ can be obtained as a sequence 
\[ \Lambda_1=\Lambda(\fP,X_1), \Lambda_2=\Lambda_1(\fP,X_2),\dots, \Pi=\Lambda_{k-1}(\fP,X_{k-1}) \] 
of $\fP$-neighbors.  However, not all such $k$-iterated neighbors are $\fP^k$-neighbors: $\Lambda$ is itself a $\fP$-neighbor of any of its $\fP$-neighbors, for example.  

\subsection*{Neighbors, the genus, and strong approximation}

The $\fP$-neighbors can also be understood very explicitly when $\fP$ is \emph{odd} (i.e., $\fP \nmid 2$).  

Let $X \subseteq \Lambda$ be isotropic modulo $\fP$ with $\dim X/\fP X =k$.  We revisit the elementary divisor theory of Section 4.  There is a $\Z_{L,\fp}$-basis $x_1,\dots,x_n$ for $\Lambda_\fp$ such that $x_1,\dots,x_k$ is a basis for $X_\fp$ such that a basis for $\Lambda(\fP,X)_\fp$ is 
\[ (\overline{P}/P)x_1,\dots, (\overline{P}/P)x_k,  x_{k+1}, \dots, x_n \]
if $\fP \neq \overline{\fP}$ and is
\[ Px_1,\dots, Px_k, x_{k+1}, \dots, x_{n-k}, P^{-1} x_{n-k+1}, \dots, P^{-1}x_{n},  \]
if $\fP=\overline{\fP}$, where $P$ is a uniformizer of $\fP$.  In the first case, since $q=\overline{P}P^{-1}$ has $q\overline{q}=1$, this diagonal change of basis is an isometry and thus $\Lambda(\fP,X)_\fp \cong \Lambda_\fp$; a direct calculation in the latter case shows again that it is an isometry.  


Since the invariant factors of a $\fP^k$-neighbor are supported over $\fp$, we have proven the following lemma.

\begin{lemma}
Let $\Pi$ be a $\fP$-neighbor of $\Lambda$ with $\fp$ below $\fP$ and $\fp \nmid \fd(\Lambda)$.  Suppose that $\fp$ is odd.  Then $\Pi$ belongs to the genus of $\Lambda$.
\end{lemma}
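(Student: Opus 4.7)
The plan is to verify that $\Pi$ lies in the genus of $\Lambda$ by checking local isometry $\Pi_{\mathfrak{q}} \cong \Lambda_{\mathfrak{q}}$ as Hermitian $\Z_{L,\mathfrak{q}}$-lattices for every prime $\mathfrak{q}$ of $F$. At any prime $\mathfrak{q}$ distinct from $\fp$, the invariant factors of $\Pi$ relative to $\Lambda$ are trivial by the definition of a $\fP$-neighbor, so $\Pi_{\mathfrak{q}} = \Lambda_{\mathfrak{q}}$ and local isometry is automatic. The entire content of the lemma therefore concentrates at the prime $\fp$.

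At $\fp$, I would simply organize the explicit computation already carried out in the paragraph preceding the lemma. There one selects a $\Z_{L,\fp}$-basis $x_1,\dots,x_n$ of $\Lambda_\fp$ adapted to the elementary-divisor normal form supplied by Proposition \ref{elemdiv-split} or \ref{elemdiv-nonsplit}, chosen so that $x_1,\dots,x_k$ is a $\Z_{L,\fp}$-basis of $X_\fp$; a basis of $\Pi_\fp = \Lambda(\fP,X)_\fp$ is then obtained from it by an explicit diagonal transformation $T$. It remains to verify $T \in \sfG(F_\fp)$, i.e.\ that $T$ preserves $\phi$.

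If $\fP \neq \overline{\fP}$, then $T$ scales $x_1,\dots,x_k$ by $q = \overline{P}/P$ and fixes the remaining basis vectors; since $q\overline{q} = 1$, and since the normal form places $x_1,\dots,x_k$ in an isotropic subspace sufficiently orthogonal to the complement (either through the splitting $V_1 \oplus V_2$ in case $2'$ or through a hyperbolic pairing in case 2), the map $T$ preserves $\phi$. If $\fP = \overline{\fP}$, the normal form decomposes the basis into $k$ hyperbolic pairs $(e_i,f_i)$ with $\phi(e_i,f_j) = \delta_{ij}$ and an orthogonal anisotropic kernel spanned by the $g_j$; the map $T$ sends $e_i \mapsto Pe_i$, $f_i \mapsto P^{-1}f_i$, and fixes the $g_j$, preserving $\phi(e_i,f_i)=1$ once $P$ is chosen invariant under the involution. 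The main technical input, and the only place where the hypothesis $\fp \nmid 2$ (together with $\fp \nmid \fd(\Lambda)$) is actually used, is in justifying these normal forms in the ramified case and in producing a uniformizer compatible with the involution; once both are in hand, the verification that $T$ is an isometry is a one-line calculation, and local isometry at $\fp$ follows.
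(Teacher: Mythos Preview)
Your proposal is correct and follows essentially the same approach as the paper: the paper's proof \emph{is} the explicit local computation in the paragraph preceding the lemma together with the observation that the invariant factors of a $\fP$-neighbor are supported only above $\fp$, and you have simply reorganized that computation. One small caveat: in the ramified case $\fP=\overline{\fP}$ with $[L:F]=2$ you cannot always choose a uniformizer $P$ with $\overline{P}=P$ (e.g.\ $L_\fp=F_\fp(\sqrt{\pi})$ gives $\overline{P}=-P$), but the fix is immediate---send $f_i\mapsto \overline{P}^{-1}f_i$ instead of $P^{-1}f_i$, which spans the same $\Z_{L,\fp}$-module since $\overline{P}/P$ is a unit---and the paper's own ``direct calculation'' elides this point as well.
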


Now we form the graph of $\fP^k$-neighbors: the vertices consist of a set of equivalence classes of lattices in the genus of $\Lambda$, and for each vertex $\Pi$ we draw a directed edge to the equivalence class of each $\fP^k$-neighbor of $\Pi$.  This graph is $\kappa$-regular, where $\kappa$ is the number of isotropic subspaces of $\Lambda_\fp$ modulo $\fP$ of dimension $k$---since all lattices in the genus are isomorphic.  If, for example, $\phi$ is the standard form (so is totally split) and $\fP \neq \overline{\fP}$, then this number is simply the cardinality of the Grassmanian $\Gr(n,k)(\F_\fP)$ of subspaces of dimension $k$ in a space of dimension $n$, and we have the formula
\begin{equation}
\#\Gr(n,k)(\F_q)=\frac{(q^n-1)(q^n-q)\cdots (q^n-q^{k-1})}{(q^k-1)(q^k-q)\cdots (q^k-q^{k-1})}.
\end{equation}


To conclude, we show that in fact the entire genus can be obtained via iterated $\fP$-neighbors; this is equivalent to the assertion that the graph of $\fP$-neighbors is connected.  

First, we need the following important result, a consequence of strong approximation.  For the orthogonal case $L=F$, see Eichler \cite{Eichlerbook}, Kneser \cite{Kneser}, or O'Meara \cite[\S 104]{OMeara}; for the unitary case $L/F$, see Shimura \cite[Theorem 5.24, 5.27]{Shimura-unitary} (and Schiemann \cite[Theorem 2.10]{Schiemann}); and for a further perspectives, see the survey by Kneser \cite{Kneser-strong}.

We say that a lattice $\Lambda$ is \emph{nice at the ramified primes} if for all $\fq$ ramified in $L/F$, the lattice $\Lambda_\fq$ splits a one-dimensional sublattice.  If $n$ is odd or $L=F$ or $\Lambda$ is even unimodular, this condition holds.  

Let $\Cl(\Z_L)$ be the class group of $\Z_L$ and let $\Cl(\Z_L)^{\langle \overline{\phantom{x}}\rangle}$ be the set of those classes that have a representative $\fA$ with $\fA=\overline{\fA}$.  Note $\Cl(\Z_L)=\Cl(\Z_L)^{\langle \overline{\phantom{x}}\rangle}$ if $L=F$.

\begin{theorem}[Strong approximation] \label{strongapprox}
Suppose that 
\begin{enumroman}
\item $L=F$, $n \geq 3$, and $\fd(\Lambda)$ is squarefree, or 
\item $[L:F]=2$, $n \geq 2$, and $\Lambda$ is nice at the ramified primes.
\end{enumroman}
Let $S$ be a nonempty set of primes of $L$ coprime to $\fd(\Lambda)$ that represent all elements in $\Cl(\Z_L)/\Cl(\Z_L)^{\langle \overline{\phantom{x}}\rangle}$.  

Then every lattice in $\gen(\Lambda)$ is equivalent to a lattice $\Pi$ with $\Pi_\fq = \Lambda_\fq$ for all primes $\fq$ below a prime $\fQ \not\in S$.
\end{theorem}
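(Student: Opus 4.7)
The plan is to (1) translate the statement into an adelic factorization via the dictionary of Section 3, (2) invoke strong approximation for the simply connected cover $\tilde\sfG$ of $\sfG$, and (3) dispose of the residual cokernel of $\tilde\sfG \to \sfG$ using the hypothesis on $S$.

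Let $T$ denote the set of primes of $F$ lying below some prime in $S$, let $\Khat^T = \prod_{\fq \notin T} K_\fq$, and embed $\sfG(F_T) = \prod_{\fq \in T} \sfG(F_\fq)$ into $\Ghat$ with trivial components at $\fq \notin T$. Under the bijection $G\backslash\gen(\Lambda) \xrightarrow{\sim} G\backslash\Ghat/\Khat$ of Section 3, the lattice $\ghat\Lambda$ has local components equal to $\Lambda_\fq$ for all $\fq \notin T$ precisely when $\ghat \in \sfG(F_T) \cdot \Khat^T$, so the conclusion is equivalent to the factorization
\[
\Ghat = G \cdot \sfG(F_T) \cdot \Khat^T.
\]
Now pass to the simply connected cover $\pi : \tilde\sfG \to \sfG$, namely $\mathrm{Spin}(\phi)$ in case (i) and $\SU(\phi)$ in case (ii). Since every $\fQ \in S$ is coprime to $\fd(\Lambda)$, the local form $\phi_\fQ$ is isotropic (using $n \geq 3$ in (i) and $n \geq 2$ together with niceness at the ramified primes in (ii)), so $\tilde\sfG(F_\fq)$ is noncompact for each $\fq \in T$. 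Strong approximation for $\tilde\sfG$ (Eichler \cite{Eichlerbook}, Kneser \cite{Kneser-strong}, Shimura \cite{Shimura-unitary}) then gives
\[
\tilde\sfG(\Fhat) = \tilde\sfG(F) \cdot \tilde\sfG(F_T) \cdot \tilde K^T,
\]
where $\tilde K^T$ is the preimage of $\Khat^T$ inside $\tilde\sfG(\Fhat)$.

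To descend this decomposition to $\sfG$, we study the cokernel of $\pi$ on adelic points via an abelianized invariant: in case (i) it is the spinor norm $\theta : \sfG \to \mathbb{G}_m/\mathbb{G}_m^2$, and the squarefreeness of $\fd(\Lambda)$ forces $\theta(K_\fq)$ to cover $\Z_{F,\fq}^\times/(\Z_{F,\fq}^\times)^2$ at every $\fq \nmid \fd(\Lambda)$, so the spinor-genus obstruction is absorbed by $G \cdot \Khat^T$; in case (ii) it is the determinant $\det : \sfG \to \U(1) = \ker(N_{L/F})$, and Hilbert 90 combined with the norm exact sequence identifies the global obstruction with a quotient of $\Cl(\Z_L)/\Cl(\Z_L)^{\langle \overline{\phantom{x}}\rangle}$---which is precisely the group over which $S$ is assumed to surject. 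In either case the remaining adjustment can be carried out inside $\sfG(F_T) \cdot \Khat^T$, yielding the desired factorization.

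The main obstacle will be this third step: carefully tracking the local cokernels $\sfG(F_\fq)/\pi(\tilde\sfG(F_\fq))$ at ramified and dyadic primes dividing $\fd(\Lambda)$, and verifying that the hypotheses ``$\fd(\Lambda)$ squarefree'' in (i) and ``$\Lambda$ is nice at the ramified primes'' in (ii) preclude any exotic obstruction that would force adjoining primes beyond $S$. Once this bookkeeping is in place, the theorem follows by combining strong approximation on $\tilde\sfG$ with the class-field-theoretic description of the abelianization.
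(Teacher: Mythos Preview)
Your architecture is the same as the paper's: pass to the simply connected cover $\mathrm{Spin}(\phi)$ or $\SU(\phi)$, use isotropy at the primes below $S$ to ensure noncompactness so that strong approximation applies, and then control the difference between the spin/special genus and the full genus. Two small misattributions are worth correcting. First, the hypothesis ``$\Lambda$ is nice at the ramified primes'' is not needed for isotropy at primes in $S$ (those primes are coprime to $\fd(\Lambda)$, so unimodularity and $n\geq 2$ already suffice); rather, niceness is what makes Shimura's identification of the special-genus obstruction with $\Cl(\Z_L)/\Cl(\Z_L)^{\langle\overline{\phantom{x}}\rangle}$ go through. Second, squarefreeness of $\fd(\Lambda)$ is irrelevant at primes $\fq\nmid\fd(\Lambda)$ (there $\theta(K_\fq)$ already covers $\Z_{F,\fq}^\times/(\Z_{F,\fq}^\times)^2$ because $\Lambda_\fq$ is unimodular); its role is precisely at the primes \emph{dividing} $\fd(\Lambda)$, where it forces the local spinor norm of $K_\fq$ to be large enough that the genus contains a single spinor genus (O'Meara \cite[\S 102]{OMeara}, Kneser \cite{Kn3}). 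With these two hypotheses relocated, your step (3) goes through and matches the paper's sketch.
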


\begin{proof}[sketch]
The hypotheses $n \geq 3$ and $n \geq 2$ are necessary, as they imply that the corresponding spin or special unitary group is simply connected; they also further imply that for all primes $\fp \nmid \fd(\Lambda)$ below a prime $\fP \in S$, the form $\phi_\fp$ on $V_\fp$ is isotropic, so $G_\fp$ is not compact.  Since the set $S$ is nonempty, strong approximation then implies that every lattice in the spin genus or special genus of $\Lambda$ is equivalent to a lattice $\Pi$ as in the statement of the theorem.  Finally, the hypothesis that $\fd(\Lambda)$ is squarefree in the orthogonal case implies that the genus of $\Lambda$ contains only one spinor genus (see O'Meara \cite[\S 102]{OMeara} or Kneser \cite{Kn3}); and the difference between the special genus and the genus of $\Lambda$ is measured by the group $\Cl(\Z_L)/\Cl(\Z_L)^{\langle \overline{\phantom{x}}\rangle}$ by work of Shimura when $\Lambda$ is nice at the ramified primes.
\end{proof}

\begin{remark}
These are not the minimal set of hypotheses in which strong approximation holds, but they will suffice for our purposes; see the references above for a more comprehensive treatment.
\end{remark}

We then have the following corollary; see also Kneser \cite[\S 2]{Kneser}, Iyanaga \cite[2.8--2.11]{Iyanaga2}, and Hoffmann \cite[Theorem 4.7]{Hoffmann}.

\begin{corollary}  \label{getallgenus}
Under the hypotheses of Theorem \textup{\ref{strongapprox}}, every lattice in $\gen(\Lambda)$ can be obtained as a sequence of $\fP$-neighbors for $\fP \in S$.  
\end{corollary}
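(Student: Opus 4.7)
The plan is to combine Theorem~\ref{strongapprox} (the version of strong approximation just proven) with the elementary divisor analysis from Section~4 via an inductive ``distance'' argument. The rough shape: strong approximation lets us replace $\Pi \in \gen(\Lambda)$ by an isometric representative that differs from $\Lambda$ only at primes below $S$; elementary divisor theory then lets us write this local difference as a sum of elementary $\fP$-neighbor moves.

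First, given $\Pi \in \gen(\Lambda)$, I would apply Theorem~\ref{strongapprox} to choose a $G$-equivalent representative of $\Pi$ satisfying $\Pi_\fq = \Lambda_\fq$ for every prime $\fq$ not below some $\fP \in S$. With this normalization, the lattices $\Lambda$ and $\Pi$ coincide outside finitely many primes, all of which lie below $S$, and at each such prime they share the same genus (since $\Pi \in \gen(\Lambda)$) and in particular the same discriminant. By Propositions~\ref{elemdiv-split} and~\ref{elemdiv-nonsplit}, the local relation at each such $\fp$ is encoded by a nondecreasing sequence of integers $r_1^{(\fp)} \le \cdots \le r_{\nu_\fp}^{(\fp)}$; I would then define a nonnegative integer invariant
\[
d(\Lambda,\Pi) = \sum_{\fp} \sum_{i} \bigl|r_i^{(\fp)}\bigr|
\]
summed over the finite set of primes where $\Lambda$ and $\Pi$ differ, and induct on $d$. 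The base case $d=0$ gives $\Lambda = \Pi$.

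For the inductive step, I would pick a prime $\fp$ below some $\fP \in S$ at which $\Lambda_\fp \neq \Pi_\fp$, and exhibit an isotropic subspace $X \subseteq \Lambda$ of dimension one modulo $\fP$ such that the $\fP$-neighbor $\Lambda' = \Lambda(\fP,X)$ satisfies $d(\Lambda',\Pi) < d(\Lambda,\Pi)$. The elementary divisor basis of $\Lambda_\fp$ adapted to $\Pi_\fp$ supplied by Proposition~\ref{elemdiv-split} or~\ref{elemdiv-nonsplit} directly produces a candidate isotropic line: the span modulo $\fP$ of the basis vector $e_1$ (or $f_\nu$) at which the top elementary divisor occurs. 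Via the bijection of Propositions~\ref{isotropisneighbor} and~\ref{neighborisisotrop} (and the ``$\fP^k$-as-iterated-$\fP$-neighbor'' flag remark) the resulting $\Lambda'$ differs from $\Lambda$ only at $\fp$ and agrees with $\Pi$ on the selected direction, strictly decreasing the total elementary divisor length.

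The main obstacle is the inductive step: verifying that such an isotropic line always exists in $\Lambda \cap \Pi$ with the prescribed image modulo $\fP$, and that the resulting $\Lambda'$ actually moves the invariant factors of $\Lambda$ relative to $\Pi$ in the right direction rather than just producing another element of the genus. This relies on the hypothesis $\fp \nmid \fd(\Lambda)$ (so $\phi$ is nondegenerate modulo $\fP$ and isotropic lines in the elementary-divisor basis exist inside $\Lambda \cap \Pi$), together with a careful local bookkeeping: one checks, using the explicit bases of Section~4, that the $\fP$-neighbor along the top elementary divisor direction decrements that exponent by $1$ while leaving the others and the lattices at other primes unchanged. Once this local claim is verified, iterating produces the required finite sequence of $\fP$-neighbors with $\fP \in S$ carrying $\Lambda$ to $\Pi$.
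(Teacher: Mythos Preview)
Your proposal is correct and follows the same overall strategy as the paper: reduce via strong approximation to a representative differing from $\Lambda$ only at primes below $S$, then induct locally via $\fP$-neighbors. The difference is tactical. The paper works one prime $\fP$ at a time and inducts on the minimal $m$ with $\fP^m\Pi \subseteq \Lambda$; for the inductive step it takes $X$ to be the $\Z_L$-span of representatives for $\fP^m\Pi$ modulo $\fP^m\Pi \cap \fP\Lambda$, which is automatically isotropic modulo $\fq$ since $\phi(X,X)\subseteq \fq^m$, and then checks directly that $\fP^{m-1}\Pi \subseteq \Lambda(\fP,X)$. Your version instead tracks the full elementary-divisor profile and decrements one exponent at a time along an explicit basis line. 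The paper's intrinsic construction of $X$ from $\fP^m\Pi$ is a bit slicker precisely because it bypasses the verification you flag as ``the main obstacle'': isotropy of the chosen line need not be read off a basis. In particular, be aware that in the split case of Proposition~\ref{elemdiv-split} the basis vectors $e_i$ carry no a priori isotropy, so your candidate line there requires additional justification (whereas in the nonsplit case the Witt basis handles it for free); the paper's choice of $X$ avoids this case split entirely.
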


\begin{proof}
Let $\Pi \in \gen(\Lambda)$.  By strong approximation, we may assume that $\Pi_\fQ=\Lambda_\fQ$ for all $\fQ \not\in S$.  

First, suppose that $\Pi_\fQ = \Lambda_\fQ$ for all $\fQ \neq \fP$.  Then $\fP^m \Pi \subseteq \Lambda$ for some $m \in \Z_{\geq 0}$.  We proceed by induction on $m$.  If $m=0$, then $\Pi \subseteq \Lambda$ and since $\fd(\Pi)=\fd(\Lambda)$ we have $\Pi=\Lambda$.  

Suppose $m>0$, and choose $m$ minimal so that $\fP^m \Pi \subseteq \Lambda$.  Let $X$ be the $\Z_L$-submodule of $\fP^m \Pi$ generated by a set of representatives for $\fP^m \Pi$ modulo $\fP^m \Pi \cap \fP\Lambda$.  Then $X \subseteq \fP^m \Pi \subseteq \Lambda$.  Now consider again the proof of Proposition \ref{neighborisisotrop}.  We see that $X$ is isotropic (in fact, $\phi(X,X) \in \fq^m$).  Form the neighbor $\Lambda(\fP,X)$.  Then $\Lambda(\fP,X)$ can be obtained from a sequence of $\fP$-neighbors of $\Lambda$.  

Now we have
\[ \fP^{m-1} \Pi = \fP^{-1} X + (\Lambda \cap \fP^{m-1} \Pi) \subseteq \fP^{-1} X + (\Lambda \cap \overline{\fP} X^{\#}) = \Lambda(\fP,X) \]
since 
\[ \phi(X,\fP^{m-1} \Pi) \subseteq \phi(\fP^m \Pi,\fP^{m-1} \Pi) \subseteq \fP \fq^{m-1}\phi(\Pi,\Pi) \subseteq \fP.  \]
Therefore, by induction, $\fP^{m-1} \Pi$ can be obtained by a repeated $\fP$-neighbor of $\Lambda(\fP,X)$, and we are done by transitivity.

In the general case, we simply repeat this argument for each prime $\fP$ in $\Z_L$.
\end{proof}

\subsection*{Hecke operators}

We now connect the theory of neighbors to Hecke operators via elementary divisors and the Cartan decomposition as in the previous section.  Specifically, we compute the action of $\cH(\sfG(F_\fp),K_\fp)$ on $M(W,\Khat)$ for primes $\fp \nmid \fd(\Lambda)$.  In this case, the corresponding lattice is unimodular.


As should now be evident from this description in terms of maximal isotropic subspaces, the Hecke operator acts on a lattice by a summation over its neighbors.  We record this in the following theorem.

\begin{theorem} \label{Heckeisneighbor}
Let $\phat \in \Ghat$ correspond to the sequence 
\[ 0 \leq \dots \leq 0\leq \underbrace{1 \leq \dots \leq 1}_k \]
in Proposition \ref{elemdiv-nonsplit} or \ref{elemdiv-split}.  Write $\Khat \phat \Khat = \bigsqcup \phat_j \Khat$.  Then for any $\xhat \in \Ghat$, the set of lattices
\[ \Pi_j = \xhat \phat_j \Lambda = \xhat \phat_j \Lambdahat \cap V \]
is in bijection with the set of $\fP^k$-neighbors of $\xhat \Lambda$.
\end{theorem}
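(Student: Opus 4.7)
The plan is to reduce the statement to a purely local question at the prime $\fp$ lying below $\fP$, invoke the elementary divisor decomposition of Section~4 to identify $K_\fp$-orbits of lattices with invariant factor data, and then match the cocharacter $(0,\ldots,0,1,\ldots,1)$ with the definition of $\fP^k$-neighbor given at the start of Section~5. First I would choose $\phat$ with trivial components outside $\fp$, so that $\Khat\phat\Khat$ factors as the product of $K_\fp\phat_\fp K_\fp$ with trivial double cosets elsewhere; the decomposition $\Khat\phat\Khat=\bigsqcup\phat_j\Khat$ then descends to a local one, and $\xhat\phat_j\Lambda$ agrees with $\xhat\Lambda$ at every prime $\fp'\neq\fp$. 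After replacing $\Lambda$ with $\xhat\Lambda$ (still unimodular at $\fp$, since $\fp\nmid\fd(\Lambda)=\fd(\xhat\Lambda)$), the problem becomes to show that the lattices $\phat_j\Lambda_\fp$ are in bijection with the $\fP^k$-neighbors of $\Lambda_\fp$.

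Next I would invoke the bijection $\Ghat/\Khat\xrightarrow{\sim}\gen(\Lambda)$, $\ghat\Khat\mapsto\ghat\Lambda$, from Section~3, which localizes to a $K_\fp$-equivariant identification of $G_\fp/K_\fp$ with the local genus of $\Lambda_\fp$. Under this map, the right cosets $(\phat_j)_\fp K_\fp$ making up $K_\fp\phat_\fp K_\fp$ correspond to the $K_\fp$-orbit of $\phat_\fp\Lambda_\fp$, and distinct cosets produce distinct lattices because $K_\fp$ is by definition the stabilizer of $\Lambda_\fp$. The uniqueness clauses in Propositions~\ref{elemdiv-split} and~\ref{elemdiv-nonsplit} say that two lattices in the local genus lie in the same $K_\fp$-orbit precisely when they share the same sequence of invariant factors relative to $\Lambda_\fp$, so the set $\{\phat_j\Lambda_\fp\}$ is exactly the set of lattices in $V_\fp$ whose invariant factors match those dictated by the cocharacter $\phat$.

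The remaining step, which I expect to be the main bookkeeping obstacle, is the explicit combinatorial matching. In the split case (Proposition~\ref{elemdiv-split}), the sequence $(0,\ldots,0,1,\ldots,1)$ with $k$ ones yields $n-k$ trivial invariant factors and $k$ factors equal to $\overline{P}/P=\overline{\fP}\fP^{-1}$, which matches the definition of a $\fP^k$-neighbor directly (note $\fP\neq\overline{\fP}$ automatically here). In the nonsplit case (Proposition~\ref{elemdiv-nonsplit}), writing the $\nu$-long cocharacter sequence as $(0,\ldots,0,1,\ldots,1)$ with $k\leq\nu$ ones yields $k$ factors $\overline{P}$, then $n-2k$ trivial middle factors (including the anisotropic $g_j$ block and the remaining $e_i,f_i$ pairs), then $k$ factors $P^{-1}$, matching the first form of the definition for $k\leq n/2$. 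When $k>n/2$ the hypothesis $\fP\neq\overline{\fP}$ forces the inert sub-case, and overlapping pairs of factors consolidate into the middle block $\overline{\fP}\fP^{-1}$ of the second form of the definition. Carrying this matching out uniformly across the split, inert, and ramified behaviors of $\fP$ above $\fp$, and confirming that no lattices outside the prescribed $K_\fp$-orbit appear, completes the bijection.
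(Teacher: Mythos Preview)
Your strategy is sound and in fact more self-contained than the paper's own argument. The paper's proof is two sentences: it asserts that each $\Pi_j$ is a $\fP^k$-neighbor by inspection of invariant factors, and then establishes the bijection by a cardinality count, putting both sides in bijection with the $k$-dimensional isotropic subspaces of $\Lambda/\fP\Lambda$ (invoking the Corollary to Propositions~\ref{isotropisneighbor} and~\ref{neighborisisotrop} on the neighbor side, and an unstated coset parametrization on the other). Your route via the Cartan decomposition---identifying the right cosets in $K_\fp\phat_\fp K_\fp$ with the $K_\fp$-orbit of $\phat_\fp\Lambda_\fp$, and that orbit with the set of all local lattices having the prescribed elementary-divisor sequence---bypasses the isotropic-subspace count entirely and yields the bijection directly.

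There is, however, a genuine bookkeeping slip in your split-case matching. The scalings $(\overline{P}/P)^{r_i}$ produced by Proposition~\ref{elemdiv-split} are \emph{not} the invariant factors in the sense of Section~5: the fractional ideal $\overline{\fP}\fP^{-1}$ is incomparable with $\Z_L$ under inclusion, so a sequence with $n-k$ copies of $\Z_L$ and $k$ copies of $\overline{\fP}\fP^{-1}$ cannot be a Smith normal form over $\Z_L$, and certainly does not ``match the definition of a $\fP^k$-neighbor directly'' for $k\leq n/2$. To recover the actual invariant factors you must separate the $\fP$- and $\overline{\fP}$-adic parts and re-sort; doing this for $(0,\dots,0,1,\dots,1)$ with $k\leq n/2$ ones gives $k$ copies of $\fP^{-1}$, then $n-2k$ copies of $\Z_L$, then $k$ copies of $\overline{\fP}$, which \emph{does} match the definition, and for $k>n/2$ the overlap produces the middle block $(\overline{\fP}\fP^{-1})^{2k-n}$ as required. (Relatedly, ``inert sub-case'' in your final sentence should read ``split case'': the condition $\fP\neq\overline{\fP}$ means precisely that $\fp$ splits in $L$, so the $k>n/2$ situation lives entirely inside Proposition~\ref{elemdiv-split}, not~\ref{elemdiv-nonsplit}.) With this computation carried out correctly, your argument goes through.
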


\begin{proof}
Each $\Pi_j$ is indeed a $\fP^k$-neighbor, as is visible by looking at the corresponding invariant factors.  Since the $\fP^k$-neighbors are in bijection with maximal isotropic subspaces and so are the cosets $\phat_j$, these sets are in bijection.
\end{proof}

\section{Algorithmic details}

Having discussed the theory in the previous sections, we now present our algorithm for using lattices to compute algebraic modular forms.  

\subsection*{General case}

We first give a general formulation for algebraic groups: this general blueprint can be followed in other situations (including symplectic groups, exceptional groups, etc.).  We compute the space $M(W,\Khat)$ of algebraic modular forms of weight $W$ and level $\Khat$ on a group $\sfG$.  To begin with, we must decide upon a way to represent in bits the group $\sfG$, the open compact subgroup $\Khat$, and the $G$-representation $W$ so we can work explicitly with these objects.  Then, to compute the space $M(W,\Khat)$ as a module for the Hecke operators, we carry out the following tasks:
\begin{enumalg}
\item Compute representatives $\xhat_i \Khat$ ($i=1,\dots,h$) for $G\backslash \Ghat / \Khat$, as in (\ref{GGK}), compute $\Gamma_i=G \cap \xhat_i \Khat \xhat_i^{-1}$, and initialize
\[ H = \bigoplus_{i=1}^h H^0(\Gamma_i, W). \]
Choose a basis of (characteristic) functions $f$ of $H$.  
\item Determine a set of Hecke operators $T(\phat)$ that generate $\cH(\Khat)$, as in Section \ref{sec:Hecke}.  For each such $T(\phat)$:
\begin{enumalgalph} 
\item Decompose the double coset $\Khat \phat \Khat$ into a union of right cosets $\phat_j \Khat$, as in (\ref{KpKdoubletoright});
\item For each $\xhat_i$ and $\phat_j$, find $\gamma_{ij} \in G$ and $j^*$ so that 
\[ \xhat_i \phat_j \Khat = \gamma_{ij} \xhat_{j^*} \Khat. \]
\item Return the matrix of $T(\phat)$ acting on $H$ via the formula
\[ (T(\phat) f)(\xhat_i) = \sum_j \gamma_{ij} f_m(\xhat_{j^*}) \]
for each $f$ in the basis of $H$.
\end{enumalgalph}
\end{enumalg}

In step 2c, since each function $f_m$ is a characteristic function, we are simply recording for each occurrence of $j^*$ an element of $G$.  

We now turn to each of the pieces of this general formulation in our case.

\subsection*{Representation in bits}

We follow the usual algorithmic conventions for number fields \cite{Cohen1}.  A Hermitian form $(V,\phi)$ for $L/F$ is represented by its Gram matrix.  We represent a $\Z_F$-lattice $\Lambda \subset V$ by a \emph{pseudobasis} over $\Z_F$, writing
\[ \Lambda = \fA_1 x_1 \oplus \dots \oplus \fA_n x_n \]
with $x_1,\dots,x_n \in V$ linearly independent elements and $\fA_i \subset L$ fractional $\Z_L$-ideals \cite{Cohen2}.  The open compact subgroup $\Khat$ is the stabilizer of $\Lambda$ by (\ref{stabKhat}) so no further specification is required.

The irreducible, finite dimensional representations of $G$ are given by highest weight representations.  The theory is explained e.g.\ by Fulton and Harris \cite{FultonHarris}, and in \textsf{Magma} there is a construction of these representations \cite{dG01,CMT04}, based on the \textsf{LiE} system \cite{vLCL92}.  

\subsection*{Step 1: Enumerating the set of representatives}

We enumerate a set of representatives $\xhat_i \Khat$ for $G \backslash \Ghat / \Khat$ using the results of Sections 4 and 5.  For this, we will use Corollary \ref{getallgenus}, and so we must assume the hypotheses of Theorem \ref{strongapprox}, namely:
\begin{enumroman}
\item $L=F$, $n \geq 3$, and $\fd(\Lambda)$ is squarefree, or 
\item $[L:F]=2$, $n \geq 2$, and $\Lambda$ is nice at the ramified primes.
\end{enumroman}

Next, according to Corollary \ref{getallgenus} we compute a nonempty set of primes $S$ of $L$ coprime to $2\fd(\Lambda)$ that represent all elements in $\Cl(\Z_L)/\Cl(\Z_L)^{\langle \overline{\phantom{x}} \rangle}$.  By the Chebotarev density theorem, we may assume that each prime $\fP$ is split in $L/F$ if $L \neq F$.  There are standard techniques for computing the class group due to Buchmann (see Cohen \cite[Algorithm 6.5.9]{Cohen2} for further detail).  We compute the action of the involution $\overline{\phantom{x}}$ on $\Cl(\Z_L)$ directly and then compute the subgroup $\Cl(\Z_L)^{\langle \overline{\phantom{x}} \rangle}$ fixed by $\overline{\phantom{x}}$ and the corresponding quotient using linear algebra over $\Z$.

Next, we traverse the graph of $\fP$-neighbors for each $\fP \in S$.  To do this, we perform the following tasks:
\begin{enumalgalph1}
\item Compute a basis for $\Lambda_\fP$ as in Propositions \ref{elemdiv-split} and \ref{elemdiv-nonsplit} according as $L=F$ or $L \neq F$.
\item Compute the one-dimensional isotropic subspaces modulo $\fP$ in terms of the basis $e_i$ for the maximal isotropic subspace.
\item For each such subspace $X$, compute the $\fP$-neighbor $\Lambda(\fP,X)=\fP^{-1} X + \overline{\fP}X^{\#}$ using linear algebra.  
\item Test each neighbor $\Lambda(\fP,X)$ for isometry against the list of lattices already computed.  For each new lattice $\Lambda'$, repeat and return to step a with $\Lambda'$ in place of $\Lambda$.  
\end{enumalgalph1}

Since the genus is finite, this algorithm will terminate after finitely many steps.  

\begin{remark}
One can also use the exact mass formula of Gan and Yu \cite{GY} and Gan, Hanke, and Yu \cite{GHY} as a stopping criterion, or instead as a way to verify the correctness of the output.
\end{remark}

Further comments on each of these steps.

First, in steps 1a--1b we compute a basis.  When $[L:F]=2$, this is carried out as in Section 5 via the splitting $L_\fp \cong F_\fp \times F_\fp$.  When $L=F$, we use standard methods including diagonalization of the quadratic form: see e.g.\ work of the second author \cite{VoightMaxOrder} and the references therein, including an algorithm for the normalized form of a quadratic form over a dyadic field, which at present we exclude.  From the diagonalization, we can read off the maximal isotropic subspace, and this can be computed by working not over the completion but over $\Z_F/\fp^e$ for a large $e$.  
Next, in step 1c we compute the neighbors.  This is linear algebra.  
Step 1d, isometry testing, is an important piece in its own right, which we discuss in the next subsection; as a consequence of this discussion, we will also compute $\Gamma_i=\Aut(\Lambda_i)$.  From this, the computation of a basis for $H=\bigoplus_{i=1}^h H^0(\Gamma_i, W)$ is straightforward.

\subsection*{Isometry testing}

To test for isometry, we rely on standard algorithms for quadratic $\QQ$-spaces and $\ZZ$-lattices even when computing relative to a totally real base field $F$ or a CM extension $L/F$.  Let $a_1,\ldots,a_d$ be a $\ZZ$-basis for $\Z_L$ with $a_1=1$, and let $x_1,\ldots,x_n$ be a basis of $V$.  Then 
\[
\{a_i x_j \}_{\substack{i=1,\dots,d \\ j=1,\dots,n}}
\]
is a $\QQ$-basis of $V$.  Define $\QQ$-bilinear pairings
\[
\phi_i:V\times V\lra \QQ\quad\text{by}\quad \phi_i(x,y)=\tr_{L/\QQ}\phi(a_i x,y).
\]
Since $a_1=1$ and $\phi$ is a definite Hermitian form on $V$ over $L$, $\phi_1$ is a positive definite, symmetric, bilinear form on $V$ over $\Q$.  In other words, $(V,\phi_1)$ is a quadratic $\QQ$-space.  The $L$-space $(V,\phi)$ can be explicitly recovered from $(V,\phi_1)$, together with the extra data $\phi_2,\ldots,\phi_d$ by linear algebra.  Note that the forms $\phi_2,\ldots,\phi_d$ are in general neither symmetric nor positive definite.

\begin{lemma}  \label{isomZZ}
Let $\Lambda$ and $\Pi$ be lattices in $V$.  A $\ZZ$-linear map $f:\Lambda\to \Pi$ is an $\Z_L$-linear isometry if and only if each $\phi_i$ is invariant under $f$.
\end{lemma}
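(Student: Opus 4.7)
The forward implication is a direct computation: assuming $f$ is $\Z_L$-linear and $\phi$-preserving, for $x,y\in\Lambda$,
\[
\phi_i(fx, fy) = \tr_{L/\QQ}\phi(a_i fx, fy) = \tr_{L/\QQ}\phi(f(a_i x), fy) = \tr_{L/\QQ}\phi(a_i x, y) = \phi_i(x, y).
\]

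For the converse, my plan is to first extend $f$ to a $\QQ$-linear endomorphism of $V$, and then separately extract two pieces of information from the invariance of the $\phi_i$: that $\phi$ itself is preserved, and that $f$ commutes with the $\Z_L$-action. Since $\Lambda$ spans $V$ over $\QQ$, the $\ZZ$-linear map $f$ extends uniquely to a $\QQ$-linear map $f\colon V\to V$; invariance of the positive definite form $\phi_1$ forces this extension to be injective, hence bijective, on the finite-dimensional $\QQ$-space $V$. Throughout, the statements $\phi_i(fx,fy)=\phi_i(x,y)$ extend from $\Lambda$ to all of $V$ by $\QQ$-bilinearity.

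Next I would deduce preservation of $\phi$. Since $\phi$ is $L$-linear in the first slot,
\[
\phi_i(fx, fy) - \phi_i(x, y) = \tr_{L/\QQ}\bigl(a_i\bigl[\phi(fx, fy) - \phi(x, y)\bigr]\bigr),
\]
so the hypothesis gives $\tr_{L/\QQ}(a_i c) = 0$ for all $i$, where $c = \phi(fx, fy) - \phi(x, y)\in L$. As $a_1,\ldots,a_d$ is a $\QQ$-basis of $L$ and $L/\QQ$ is separable, the trace form on $L$ is nondegenerate, forcing $c=0$.

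Finally, for $\Z_L$-linearity, for any $\alpha\in\Z_L$, $x\in\Lambda$, and $y\in V$, I would compute
\[
\phi\bigl(f(\alpha x) - \alpha f(x),\,f(y)\bigr) = \phi(f(\alpha x), f(y)) - \alpha\,\phi(f(x), f(y)) = \phi(\alpha x, y) - \alpha\,\phi(x, y) = 0,
\]
using the just-established preservation of $\phi$ on $V$ together with $L$-sesquilinearity. Since $f$ is bijective on $V$, the set $\{f(y):y\in V\}$ is all of $V$, and nondegeneracy of $\phi$ yields $f(\alpha x)=\alpha f(x)$.

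The main obstacle is the last step: $\Z_L$-linearity is not visible in any single $\phi_i$ and must be recovered from the combination of nondegeneracy of $\phi$ with the bijectivity of the extended $f$. The other ingredients (the trace-nondegeneracy argument and positive definiteness of $\phi_1$) are standard but each essential.
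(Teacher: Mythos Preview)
Your argument is correct. The paper states this lemma without proof, so there is nothing to compare against; your proof fills the gap cleanly. The key steps---recovering $\phi$ from the $\phi_i$ via nondegeneracy of the trace pairing on $L$, and then deducing $\Z_L$-linearity from nondegeneracy of $\phi$ together with surjectivity of the $\QQ$-linear extension of $f$---are exactly the right ones, and each is handled correctly.
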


Using Lemma \ref{isomZZ}, we reduce the problem of testing if two Hermitian lattices over $\Z_F$ are isometric to a problem of testing if two lattices over $\Z$ are isometric in a way which preserves each $\phi_i$.  For this, we rely on the algorithm of Plesken and Souvignier \cite{PS}, which matches up short vectors and uses other tricks to rule out isometry as early as possible, and has been implemented in \textsf{Magma} \cite{Magma} by Souvignier, with further refinements by Steel, Nebe, and others.

For each representative lattice found, we compute a set of invariants to quickly rule out isometry whenever possible.  These invariants include things like the sizes of the automorphism groups, the first few terms in the theta series, and invariants of sublattices (e.g.\ those generated by short vectors).

\begin{remark}
An essential speed up in the case of Brandt modules is given by Demb\'el\'e and Donnelly \cite{DD} (see also Kirschmer and the second author \cite[Algorithm 6.3]{KV}).  To decide if two right ideals $I,J$ in a quaternion order $\calO$ are isomorphic, one first considers the colon ideal $(I:J)_L=\{\alpha \in B : \alpha J \subseteq I\}$ to reduce the problem to show that a single right ideal is principal; then one scales the positive definite quadratic form over $\Q$ by an explicit factor to reduce the problem to a single shortest vector calculation.  It would be very interesting to find an analogue of this trick in this context as well.
\end{remark}

\subsection*{Step 2: Hecke operators}

Essentially all of the work to compute Hecke operators has already been set up in enumerating the genus in Step 1.  The determination of the Hecke operators follows from Sections 4 and 5, and their explicit realization is the same as in Step 1a.  We work with those Hecke operators supported at a single prime.  In Step 2a, from Theorem \ref{Heckeisneighbor}, the double coset decomposition is the same as set of $\fP$-neighbors, which we compute as in Step 1.  In Step 2b, we compute the isometry $\gamma_{ij}$ using isometry testing as in the previous subsection: we quickly rule out invalid candidates until the correct one is found, and find the corresponding isometry.  Finally, in Step 2c, we collect the results by explicit computations in the weight representation.

\section{Examples}
In this section, we illustrate our methods by presenting the results of some explicit computations for groups of the form $\sfG=\sfU_{L/F}(3)$, relative to a CM extension $L/F$, where $L$ has degree $2$, $4$, or $6$.  We compute the Hecke operator at unramified, degree one primes $\fp$ of $L$ corresponding to summing over $\fp$-neighbours of a lattice.

\begin{remark}
We made several checks to ensure the correctness of our programs.  First, we checked that matrices of Hecke operators for $\fp$ and $\fq$ with $\fp\neq \fq$ commuted.  (They did.)  Additionally, a known instance of Langlands functoriality implies that forms on $U(1)\times U(1)\times U(1)$ transfer to $U(3)$.  Checking that resulting endoscopic forms occur in the appropriate spaces~\cite[\S\S4.2, 4.6]{Loeffler} also provided a useful test of our implementation.  (It passed.)
\end{remark}

\begin{example}
$U_{L/F}(3)$, $L=\QQ(\sqrt{-7})$, $F=\QQ$, weights $(0,0,0)$ and $(3,3,0)$:

\begin{table}[htdp]
\caption{Computation of $T_{\fp,1}$ on $M(\QQ)$ for unramified, degree one $\fp\subset\ZZ_L$ with $2<N(\fp)<200$.}
\begin{center}
\begin{tabular}{|c|c|c|c|c|c|c|c|c|c|c|c}\label{tbl:quadratic1}
$N(\fp)$      & 2    & 11 & 23   & 29   & 37   & 43   & 53   & 67   & 71   & 79   & 107\\
\hline 	   
time (s) & 0.02 & 0.07 & 0.18 & 0.28 & 0.42 & 0.57 & 0.82 & 1.35 & 1.46 & 1.86 & 3.73 \\
\hline 
$a_p$      & 7  &  133  & 553  & 871  & 1407 & 1893 & 2863 & 4557 & 5113 & 6321 & 11557\\
\hline
$b_p$ & -1 & 5 & 41 & -25& -1   & 101  & 47   & -51  & 185  & -15  & 293  \\
\hline  \\ \\ \hline
     $N(\fp)$ & 109  & 113  & 127  & 137  & 149  & 151  & 163   & 179   & 191   & 193   & 197\\
     \hline
time (s) & 4.18 & 4.59 & 5.85 & 7.08 & 8.56 & 9.04 & 10.88 & 13.78 & 16.92 & 17.22 & 17.29\\
\hline
$a_p$ & 11991 & 12883 & 16257 & 18907 & 22351& 22953& 26733& 32221& 36673& 37443& 39007\\
\hline
$b_p$ & 215 & -109 & 129 & -37 & 335 & 425 & 237 & -163 & -127 & 131 & 479
\end{tabular}
\end{center}
\label{default}
\end{table}%

Here, we extend aspects of the calculation in the principal example of~\cite{Loeffler}.  In this case, the class number of the principal genus of rank $3$ Hermitian lattices for $L/\QQ$ is $2$, with classes represented by the standard lattice $\Lambda_1=\ZZ_L^3$ and the lattice $\Lambda_2\subset L^3$ with basis
\[
(1-\omega,0,0),\quad(1,1,0),\quad \tfrac{1}{2}(-3+\omega, -1+\omega, -1+\omega).\qquad
\left(\omega = \tfrac{1}{2}(1+\sqrt{-7})\right)
\]
The lattices $\Lambda_1$ and $\Lambda_2$ are $2$-neighbours:
\[
\omega\Lambda_2\subset\Lambda_1,\quad \bar{\omega}\Lambda_1\subset\Lambda_2.
\]
(Representatives for the ideal classes in the principal genus were computed in the first place by constructing the $2$-neighbour graph $\Lambda_1$.)  It follows that the space of $M(\QQ)$ of algebraic modular forms for $U_{L/F}(3)$ with trivial coefficients is simply the $2$-dimensional space of $\QQ$-valued function on 
$\{[\Lambda_1],[\Lambda_2]\}$.  We obtain two distinct systems $a_\fp$ and $b_\fp$ of Hecke eigenvalues occurring in $M(\QQ)$.  (See Table~\ref{tbl:quadratic1}.)  We point out observations of Loeffler considering the nature of the corresponding algebraic modular forms:  First, observe that the system $a_\fp$ is ``Eisenstein'', in the sense that the eigenvalues of $T_{\fp,1}$ is the degree of the Hecke operator $T_{\fp,1}$:
\[
a_\fp = N(\fp)^2+N(\fp)+1.
\]
Equivalently, the corresponding algebraic modular form is the lift from $U(1)\times U(1)\times U(1)$ of $\chi_{\text{triv}}\times \chi_{\text{triv}}\times \chi_{\text{triv}}$.  The algebraic modular form with system of eigenvalues $b_\fp$ is also a lift form $U(1)\times U(1)\times U(1)$:
\[
b_\fp = \fp^2 + \fp\bar{\fp} + \bar{\fp}^2.
\]
Here, we abuse notation and write $\fp$ for either of its generators, and note that the expression on the right in the above is independent of this choice.

We now consider analogous computations involving forms of higher weight.  The space $M(V_{3,3,0})$ the associated to the above data has dimension $4$, while the representation space $V_{3,3,0}$ itself has dimension 64.  Loeffler~\cite{Loeffler} showed that $M(V_{3,3,0})$ splits as the direct sum of two $2$-dimensional, Hecke-stable subspaces not diagonalizable over $\QQ(\sqrt{-7})$:
\[
M(V_{3,3,0}) = W_1\oplus W_2.
\]
One of these spaces arises as the lift involving a $2$-dimensional Galois conjugacy class of classical eigenforms in $S_9(\Gamma_1(7))$ via a lifting from the endoscopic subgroup $U(1)\times U(2)$ of $U(3)$.  The other corresponds to a Galois conjugacy class of nonendoscopic forms, whose associated $\ell$-adic Galois representations $\rho:G_L\to GL_2(\QQ_\ell)$ are irreducible.

We consider the corresponding modular space $M(V_{3,3,0}/\FF_7)$ of mod $(\sqrt{-7})$-modular forms of weight $(3,3,0)$.  We computed the Hecke operators $T_{\fp,1}$ for unramified, degree one primes $\fp\subset\ZZ_L$ of norm at most $100$.  In Table~\ref{tbl:quadratic2}, we present the corresponding run-times.

\begin{table}[htdp]
\caption{Computation of $T_{\fp,1}$ mod $(\sqrt{-7})$ for degree one, unramified $\fp\subset\ZZ_L$ with $2<N(\fp)<100$.}
\begin{center}
\begin{tabular}{l|l|l|l|l|l|l|l|l|l|l}\label{tbl:quadratic2}
       & 2 & 11 & 23 & 29 & 37 & 43 & 53 & 67 & 71 & 79\\ \hline
Steps 2a,b & 0.11 & 2.49 & 12.37 & 26.52 & 60.08 & 128.29 & 265.47 & 595.90 & 984.19 & 1561.67\\ \hline
Step 2c & 0.41 & 26.77 & 123.82 & 208.38 & 328.95 & 450.68 & 686.25 & 1431.75 & 1414.790 & 1774.52 \\ \hline
$\bar{a}_\fp$ & 0 & 0 & 0 & 3 & 0 & 3 & 0 & 0 & 3 & 0\\ \hline
$\bar{b}_\fp$ & 6 & 5 & 6 & 3 & 6 & 3 & 5 & 5 & 3 & 6
\end{tabular}
\end{center}
\label{default}
\end{table}%

Simultaneously diagonalizing the matrices of these Hecke operators we obtain two mod $(\sqrt{-7})$ systems of eigenvalues that we write $\bar{a}_\fp$ and $\bar{b}_\fp$.  We choose this notation because those systems are the reductions modulo 7 of the corresponding trivial weight systems $a_\fp$ and $b_\fp$ from earlier.  We have an explicit modulo 7 congruence between a nonendoscopic form in weight $(3,3,0)$ and an endoscopic form in weight $(0,0,0)$.  Thus, the modulo 7 Galois representation associated to the system $b_\fp$ is reducible.
\end{example}

\begin{example}
$U_{L/F}(3)$, $F=\QQ(\sqrt{13})$, $L=F\left(\sqrt{-13-2\sqrt{13}}\right)$, weight (0,0,0):

In this example, the class number of the principal genus is 9, as is the dimension of the corresponding space of   automorphic forms with trivial weight.
\begin{table}[htdp]
\caption{Computation of $T_{\fp,1}$ for degree one, unramified $\fp\subset\ZZ_L$ with $2<N(\fp)<250$.}
\begin{center}
\begin{tabular}{c|c|c|c|c|c|c|c|c|c|c|c}\label{tbl:quartic}
$N(\fp)$       & 29 & 53 & 61 & 79 & 107 & 113 & 131 & 139  & 157 & 191 &211\\ \hline
time (s) & 15.15 &51.73 & 70.35 & 123.21 & 216.82 & 242.20 & 339.50 & 378.81 & 486.22 & 727.81& 943.89\\ \hline
$\bar{a}_\fp $&4&11&12&5&5&3&6&10&1&11&2
\end{tabular}
\end{center}
\label{default}
\end{table}%
We computed the matrices of the Hecke operators acting on the $\QQ$-vector space $M(\QQ)$ for unramified, degree one $\fp\subset\ZZ_L$ with $2<N(\fp)<250$.  There appears to be a 1-dimensional ``Eisenstein'' subspace on which $T_{\fp,1}$ acts via $\deg T_{\fp,1}=N(\fp)^2 + N(\fp) + 1$.  The 8-dimensional complement of this line decomposes into $\QQ$-irreducible subspaces of dimensions 2, 2, and 4.  In all of these computations, the level subgroup is the stabilizer of the standard lattice $\ZZ_L^3\subset L^3$.

The Hecke algebra does acts nonsemisimply on the space $M(\FF_{13})$ of modulo 13 automorphic forms.  It appears that the minimal polynomial of $T_{\fp,1}$ has degree $6$ when $N(\fp)\equiv 1\pmod{13}$ and degree $7$ when $N(\fp)\equiv 3,9\pmod{13}$.  (Other residue classes do not occur for norms of degree one primes of $F$ splitting in $L$.)  When $N(\fp)\equiv 1\pmod{13}$, the eigenvalue $3\equiv N(\fp)^2+N(\fp)+1\pmod{13}$ occurs with multiplicity 5, while when $N(\fp)\equiv 3,9\pmod{13}$, the eigenvalue $0\equiv N(\fp)^2+N(\fp)+1\pmod{13}$ occurs with multiplicity 1.  Finally, is a 1-dimensional eigenspace in $M(\FF_{13})$ with eigenvalues $\bar{a}_q$ as in Table~\ref{tbl:quartic}.
\end{example}

\begin{example}
$U_{L/F}(3)$, $K=\QQ(\zeta_7+\zeta_7^{-1})$, $L=\QQ(\zeta_7)$, weight $(0,0,0)$:
\end{example}

In this case, the class number of the principal genus is 2, with the classes represented by the standard lattice $\Lambda_1=\ZZ_L^3$ and its $29$-neighbour $\Lambda_2$ with basis
\begin{multline*}
(\zeta_7+\zeta_7^4-\zeta_7^5,0,0),\quad (6,1,0),\\
 \tfrac{1}{29}(
-138- 234\zeta_7 - 210\zeta^2 - 303\zeta^3 - 258\zeta^4 - 117\zeta^5,\\ 
16 + 12\zeta + 13\zeta^2 + 20\zeta^3 + 11\zeta^4 + 6\zeta^5, \\
16 + 12\zeta + 13\zeta^2 + 20\zeta^3 + 11\zeta^4 + 6\zeta^5).
\end{multline*}
(This calculation took 1.85 seconds.)

\begin{table}[htdp]
\caption{Timings: computation of $T_{\fp,1}$ mod $(\sqrt{-7})$ for $\fp$ with $2<N(\fp)<100$ and $\fp\neq\bar{\fp}$}
\begin{center}
\begin{tabular}{c|c|c|c|c|c|c|c|c|c}\label{tbl:cyclo}
$N(\fp)$       & 29 & 43 & 71 & 113 & 127 & 197 & 211  & 239 & 281\\ \hline
time (s) & 2.16 &2.85 & 6.77 & 16.43 & 21.35 & 51.14 & 53.58 & 73.05 & 101.84\\ \hline
$a_\fp$ &871& 1893& 5113& 12883& 16257& 39007& 44733& 57361& 79243\\ \hline
$b_\fp$		& -25& 101 & 185 & -109 & 129 & 479 & -67 & 17 & 395
\end{tabular}
\end{center}
\label{default}
\end{table}%
Automorphism groups of $\Lambda_1$ and $\Lambda_2$ and their roles in this computation.
As above, $a_\fp=N(\fp)^2+N(\fp)+1=\deg T_{\fp,1}$, and the form with system of Hecke eigenvalues $a_\fp$ is a lift from $U(1)\times U(1)\times U(1)$.  Also, observe that
\[
a_\fp\equiv b_\fp \equiv 3\pmod{7},
\]
implying that the modulo 7 Galois representation attached to the system $b_\fp$ is reducible.
\begin{acknowledgement}
The authors would like to thank Lassina Demb\'el\'e and David Loeffler for helpful conversations.  
\end{acknowledgement}


\begin{thebibliography}{99}

\bibitem{Borel}
Armand Borel, \emph{Linear algebraic groups}, second enlarged ed., Graduate Texts in Math., vol.\ 126, Springer-Verlag, New York, 1991.

\bibitem{Magma} 
Wieb\ Bosma, John\ Cannon, and Catherine Playoust, \emph{The Magma algebra system. I. The user language}, J.~Symbolic Comput. \textbf{24} (1997), vol.~3--4, 235--265.

\bibitem{Chisholm}
Sarah Chisholm, \emph{Lattice methods for algebraic modular forms on quaternionic unitary groups}, Ph.D.\ thesis, University of Calgary, anticipated 2013.

\bibitem{CMT04}
Arjeh M.\ Cohen, Scott H.\ Murray, and D.\ E.\ Taylor, \emph{Computing in groups of Lie type}, Math.\ Comp. \textbf{73} (2004), no.\ 247, 1477--1498.

\bibitem{Cohen1}
Henri Cohen, \emph{A course in computational algebraic number theory}, Graduate Texts in Math., vol.~138, Springer-Verlag, Berlin, 1993.

\bibitem{Cohen2}
Henri Cohen, \emph{Advanced topics in computational algebraic number theory}, Graduate Texts in
Math., vol.~193, Springer-Verlag, Berlin, 2000.

\bibitem{CD}
Clifton Cunningham and Lassina Demb\'el\'e, \emph{Computation of genus 2 Hilbert-Siegel modular forms on $\mathbb{Q}(\sqrt{5})$ via the Jacquet-Langlands Correspondence}, Experimental Math.\ \textbf{18} (2009), no.\ 3, 337--345.

\bibitem{Dembele}
Lassina Demb\'el\'e, \emph{Quaternionic Manin symbols, Brandt matrices and Hilbert modular forms}, Math.\ Comp.\ \textbf{76} (2007), no.~258, 1039--1057. 

\bibitem{Dembele2}
Lassina Demb\'el\'e, \emph{A non-solvable Galois extension of $\Q$ ramified at $2$ only}, C.\ R.\ Acad.\ Sci.\ Paris, Ser.\ I, \textbf{347} (2009), 111--116.

\bibitem{DD}
Lassina Demb\'el\'e and Steve Donnelly, \emph{Computing Hilbert modular forms over fields with nontrivial class group}, Algorithmic number theory (Banff, 2008), Lecture Notes in Comput.\ Sci., vol.\ 5011, Springer, Berlin, 2008, 371--386.

\bibitem{DGV}
Lassine Demb\'el\'e, Matthew Greenberg, and John Voight, \emph{Nonsolvable number fields ramified only at $3$ and $5$}, Compositio Math.\ \textbf{149} (2011), no.\ 3, 716--734.

\bibitem{DV}
Lassina Demb\'el\'e and John Voight, \emph{Explicit methods for Hilbert modular forms}, accepted to Elliptic curves, Hilbert modular forms and Galois deformations.

\bibitem{dG01}
W.\ A.\ de Graaf, \emph{Constructing representations of split semisimple Lie algebras}, J.\ Pure Appl.\ Algebra, Effective methods in algebraic geometry (Bath, 2000), \textbf{164} (2001), no.\ 1--2, 87--107.

\bibitem{Dieulefait}
Luis Dieulefait, \emph{A non-solvable extension of $\Q$ unramified outside 7}, to appear in Compositio Math.

\bibitem{Eichler}
Martin Eichler, \emph{On theta functions of real algebraic number fields}, Acta Arith.\ \textbf{33} (1977), no.~3, 269--292.

\bibitem{Eichlerbook}
Martin Eichler, \emph{Quadratische Formen und orthogonale Gruppen}, Springer-Verlag, Berlin, 1952.

\bibitem{Eichlerbasis}
Martin Eichler, \emph{The basis problem for modular forms and the traces of the Hecke operators}, Modular functions of one variable, I (Proc.\ Internat.\ Summer School, Univ.\ Antwerp, Antwerp, 1972), Lecture Notes in Math., vol.~320, Springer, Berlin, 1973, 75--151. 

\bibitem{Eichlerbasis1}
Martin Eichler, \emph{Correction to: ``The basis problem for modular forms and the traces of the Hecke operators''}, Modular functions of one variable, IV (Proc.\ Internat.\ Summer School, Univ.\ Antwerp, Antwerp, 1972), Lecture Notes in Math., vol.~476, Springer, Berlin, 1975, 145--147.

\bibitem{FultonHarris}
William Fulton and Joe Harris, \emph{Representation theory: a first course}, Graduate Texts in Math., vol.\ 129, Springer-Verlag, New York, 1991.

\bibitem{GHY}
Wee Teck Gan, Jonathan Hanke, and Jiu-Kang Yu, \emph{On an exact mass formula of Shimura}, Duke Math.\ J.\ \textbf{107} (2001), no.\ 1, 103--133.

\bibitem{GY}
Wee Teck Gan and Jiu-Kang Yu, \emph{Group schemes and local densities}, Duke Math.\ J.\ \textbf{105} (2000), no.\ 3, 497--524.

\bibitem{GreenbergVoight}
Matt Greenberg and John Voight, \emph{Computing systems of Hecke eigenvalues associated to Hilbert modular forms}, Math.\ Comp.\ \textbf{80} (2011), 1071--1092.

\bibitem{Gross1}
Benedict Gross, \emph{Algebraic modular forms}, Israel J.\ Math.\ \textbf{113} (1999), 61--93.

\bibitem{Hoffmann}
Detlev W.\ Hoffmann, \emph{On positive definite Hermitian forms}, Manuscripta Math.\ \textbf{71} (1991), 399--429.

\bibitem{HPS}
Hiroaki Hijikata, Arnold K.\ Pizer, and Thomas R.\ Shemanske, \emph{The basis problem for modular forms on $\Gamma_0(N)$}, Amer.\ Math.\ Soc., Providence, 1989.

\bibitem{Humphreys}
James E.\ Humphreys, \emph{Linear algebraic groups}, Graduate Texts in Math., vol.\ 21, Springer-Verlag, New York, 1975.

\bibitem{Iyanaga1}
K.\ lyanaga, \emph{Arithmetic of special unitary groups and their symplectic representations}, J.\
Fac.\ Sci.\ Univ.\ Tokyo (Sec.\ 1) \textbf{15} (1968), no.\ 1, 35--69.

\bibitem{Iyanaga2}
K.\ Iyanaga, \emph{Class numbers of definite Hermitian forns}, J.\ Math.\ Soc.\ Jap.\ \textbf{21} (1969), 359--374.



\bibitem{jacqlang} 
Herv\'e Jacquet and Robert P.~Langlands, \emph{Automorphic forms on GL(2)}, Lectures Notes in Math., vol.~114, Springer-Verlag, Berlin, 1970.

\bibitem{khare1}
C.~Khare and J.-P.~Wintenberger, \emph{On Serre's conjecture for 2-dimensional mod $p$ representations of $\Gal(\overline{\Q}/\Q)$}, Ann.\ of Math.\ (2) \textbf{169} (2009), no.\ 1, 229--253. 

\bibitem{KV}
Markus Kirschmer and John Voight, \emph{Algorithmic enumeration of ideal classes for quaternion orders}, SIAM J.\ Comput.\ (SICOMP) \textbf{39} (2010), no.\ 5, 1714--1747.

\bibitem{Knus}
Max-Albert Knus, \emph{Quadratic and Hermitian forms over rings}, Springer-Verlag, Berlin, 1991.

\bibitem{Kn3}
Martin Kneser, \emph{Klassenzahlen indefiniter quadratischer Formen in drei oder mehr Ver\"anderlichen}, Arch.\ Math.\ \textbf{7} (1956), 323--332.

\bibitem{Kneser}
Martin Kneser, \emph{Klassenzahlen definiter quadratischer Formen}, Arch. Math. \textbf{8} (1957), 241--250. 

\bibitem{Kneser-strong}
Martin Kneser, \emph{Strong approximation}, Algebraic groups and discontinuous subgroups (Proc.\ Sympos.\ Pure Math., Boulder, Colo., 1965), American Mathematical Society, Providence, 1966, 187–196.

\bibitem{kohel}
David Kohel, \emph{Hecke module structure of quaternions}, Class field theory: its centenary and prospect (Tokyo, 1998), ed.\ K.\ Miyake, Adv.\ Stud.\ Pure Math., vol.~30, Math.\ Soc.\ Japan, Tokyo, 2001, 177--195.

\bibitem{LanskyPollack}
Joshua Lansky and David Pollack, \emph{Hecke algebras and automorphic forms}, Compositio Math.\ \textbf{130} (2002), no.~1, 21--48. 

\bibitem{vLCL92}
M.A.A.\ van Leeuwen, A.M.\ Cohen, and B.\ Lisser, \emph{LiE, a package for Lie group computations},  CAN, Amsterdam, 1992.

\bibitem{Loeffler}
David Loeffler, \emph{Explicit calculations of automorphic forms for definite unitary groups}, LMS J.\ Comput.\ Math.\ \textbf{11} (2008), 326--342. 

\bibitem{KirschmerVoight}
Markus Kirschmer and John Voight, \emph{Algorithmic enumeration of ideal classes for quaternion orders}, SIAM J.\ Comput.\ (SICOMP) \textbf{39} (2010), no.\ 5, 1714--1747.

\bibitem{OMeara}
O.\ Timothy O'Meara, \emph{Introduction to quadratic forms}, Springer-Verlag, Berlin, 2000.

\bibitem{Pizer}
Arnold Pizer, \emph{An algorithm for computing modular forms on $\Gamma_0(N)$}, J.\ Algebra \textbf{64} (1980), vol.~2, 340--390.

\bibitem{PS}
Wilhelm Plesken and Bernd Souvignier, \emph{Computing isometries of lattices}, Computational algebra and number theory (London, 1993), J.\ Symbolic Comput.\ \textbf{24} (1997), no.~3--4, 327--334. 

\bibitem{Scharlau}
Winfried Scharlau, \emph{Quadratic and Hermitian forms}, Springer-Verlag, Berlin, 1985.

\bibitem{Schiemann}
Alexander Schiemann, \emph{Classification of Hermitian forms with the neighbour method}, J.\ Symbolic Comput.\ \textbf{26} (1998), no.\ 4, 487–-508. 

\bibitem{SH}
Rudolf Scharlau and Boris Hemkemeier, \emph{Classification of integral lattices with large class number}, Math.\ Comp.\ \textbf{67} (1998), no.~222, 737--749. 

\bibitem{Schulze-Pillot}
Rainer Schulze-Pillot, \emph{An algorithm for computing genera of ternary and quaternary quadratic
forms}, Proc.\ Int.\ Symp.\ on Symbolic and Algebraic Computation, Bonn, 1991.

\bibitem{Shimura-unitary}
Goro Shimura, \emph{Arithmetic of unitary groups}, Ann.\ of Math.\ (2) \textbf{79} (1964), 369--409.

\bibitem{SocratesWhitehouse}
Jude Socrates and David Whitehouse, \emph{Unramified Hilbert modular forms, with examples relating to elliptic curves}, Pacific J.\ Math.\ \textbf{219} (2005), no.~2, 333--364. 

\bibitem{Sage}
William Stein, \emph{SAGE Mathematics Software} (version 3.1.1), The SAGE Group, 2008, \\ \texttt{http://www.sagemath.org/}.

\bibitem{VoightMaxOrder}
John Voight, \emph{Identifying the matrix ring: algorithms for quaternion algebras and quadratic forms}, accepted, \verb|arXiv:1004.0994|.

\bibitem{Weyl}
Hermann Weyl, \emph{The classical groups: their invariants and representations}, Princeton University, Princeton, 1966.

\end{thebibliography}
\end{document}